\documentclass[fontsize=11pt, headings=normal, headinclude=true, paper=a4, pagesize, cleardoublepage=current, BCOR=10mm, fleqn, numbers=noenddot]{scrartcl}
\pdfoutput=1
\usepackage{lmodern}

\setkomafont{sectioning}{\normalcolor\bfseries}
\recalctypearea 
\setcapindent{0pt}

  \usepackage[english]{babel}  
  \usepackage[numbers,square]{natbib}
  \usepackage{mathrsfs}
  \usepackage{amsmath}
  \usepackage{amssymb}
  \usepackage{amsthm}
  \usepackage{enumitem}
  \usepackage[utf8]{inputenc}    
  \usepackage[T1]{fontenc}         
  \usepackage{graphicx}            
  \usepackage{color}		
  \usepackage[english=british]{csquotes} 
  


\newtheorem{thm}{Theorem}[section]
\newtheorem{prop}[thm]{Proposition}
\newtheorem{lem}[thm]{Lemma}

\newtheorem*{thm*}{Theorem}

\theoremstyle{definition}
\newtheorem{definition}[thm]{Definition}

\newtheorem{rem}[thm]{Remark}

\renewcommand{\phi}{\varphi}
\newcommand{\eps}{\varepsilon}

\newcommand{\ud}{\mathrm{d}}
\newcommand{\uod}{{\mathrm{od}}}
\newcommand{\ue}{\mathrm{e}}
\newcommand{\ui}{\mathrm{i}}
\newcommand{\R}{\mathbb{R}}
\newcommand{\C}{\mathbb{C}}
\newcommand{\N}{\mathbb{N}}

\newcommand{\norm}[1]{\ensuremath{\left\lVert #1 \right\rVert}}

\newcommand{\hilb}{\mathscr{H}}
\newcommand{\uppar}[1]{\ensuremath{^{(#1)}}}
  
\DeclareMathOperator{\ran}{ran}

  
\title{The resolvent of the Nelson Hamiltonian improves positivity}

\author{Jonas Lampart\thanks{Ludwig-Maximilians-Universit\"at, Mathematisches Institut, Theresienstr.\ 39, {80333} M\"unchen, Germany.
}
\thanks{CNRS \& LICB (UMR 6303), Université de Bourgogne Franche-Comté, 9 Av. A. Savary, 21078 Dijon Cedex, France.
\texttt{jonas.lampart@u-bourgogne.fr}}
}

\begin{document}

\maketitle

\begin{abstract}
We give a new proof that the resolvent of the renormalised Nelson Hamiltonian at fixed total momentum $P$ improves positivity in the (momentum) Fock-representation, for every $P$.
The argument is based on an explicit representation of the renormalised operator and its domain using interior boundary conditions, which allows us to avoid the intermediate steps of regularisation and renormalisation used in other proofs of this result.
\end{abstract}

\section{Introduction} 

An operator on a Hilbert space is said to preserve positivity if it leaves a cone of ``positive elements'', e.g. functions that are point-wise non-negative, invariant. It is said to improve positivity if it maps any non-zero positive element to a strictly positive element, meaning that the scalar product with any positive element is strictly positive.
This property has important consequences for the spectral theory of the operator. For example, if a self-adjoint bounded operator improves positivity and has a maximal eigenvalue, then this eigenvalue is simple, with a strictly positive eigenfunction, by the Perron-Frobenius-Faris theorem~\cite{faris1972}.
This method plays an important role in the spectral analysis of Hamiltonians from quantum field theory (QFT)~\cite{GlJa1970, gross1972, faris1972, frohlich1974, bach1998a, moller2005, DaHi19}. 

An important example is the Nelson model for the interaction of a non-relativistic particle with a bosonic field. 
At fixed total momentum $P$ this model is described by a self-adjoint  Hamiltonian acting on the symmetric Fock space
\begin{equation}
 \hilb_P:=\Gamma(L^2(\R^3))=\bigoplus_{n=0}^\infty \big(L^2(\R^3)\big)^{\otimes_{\mathrm{sym}} n}.
\end{equation}
The Hamiltonian has the formal expression
\begin{equation}\label{eq:H(P) formal}
 H(P)=\big(P-\ud \Gamma(k)\big)^2 + \ud \Gamma(\omega) + a(v) + a^*(v),
\end{equation}
 where $\ud \Gamma(k)$ is the field momentum, acting on each factor $L^2(\R^3)$ as multiplication by the respective variable, $\omega$ denotes the operator of multiplication by the dispersion relation $\omega(k) =\sqrt{k^2 + m^2}$ (where $m\geq 0$ is the boson mass) and $v(k)=g\omega(k)^{-1/2}$, $g\in \R$, is the form-factor of the interaction. This expression cannot be interpreted as a sum of densely defined operators on $\hilb_P$ since $v\notin L^2(\R^3)$.
 However, it is possible to define a self-adjoint renormalised Hamiltonian corresponding to the formal expression. This was constructed for the translation-invariant model by Nelson~\cite{nelson1964}, and by Cannon~\cite{Can71} for the model at fixed momentum (see also~\cite{GrWu18, DaHi19} for a recent exposition and refinements). This operator is obtained as the limit $\Lambda \to \infty$ (in norm-resolvent sense) of the operators $H_\Lambda(P)-E_\Lambda\to H_\mathrm{ren}(P)$ with ultraviolet (UV) cutoff, where  $v$ is replaced by $v_\Lambda(k)=v(k)1(|k|\leq \Lambda)$ and $E_\Lambda \in \R$ are appropriately chosen numbers. 

Fr\"ohlich~\cite{frohlich1973, frohlich1974}, and M{\o}ller~\cite{moller2005}, showed for the Nelson model with UV cutoff that the resolvent of the Hamiltonian as well as the generated semigroup improve positivity, and used this to prove that the ground state of the Hamiltonian $H_\Lambda(P)$ (which exists for $m>0$ and small $|P|$) is simple.
In~\cite{frohlich1973, frohlich1974} it was also announced that the same positivity property holds for the renormalised operator. However, a complete proof was given only recently by
Miyao~\cite{miyao2019, miyao2020}, who showed that the semigroup generated by the renormalised Hamiltonian improves positivity for every $P$, which  implies the same property for the resolvent. 
 This was then used by Dam and Hinrichs~\cite{DaHi19}, who proved non-existence of the ground state for $m=0$.
Positivity for $P=0$ in the path-integral representation had been shown earlier by Gross~\cite{gross1972} (for the model with cutoff), and by Matte and M{\o}ller~\cite{MaMo18} for the renormalised Hamiltonian. 
The difficulty in proving these results is, roughly speaking, that $H_\mathrm{ren}(P)$ is defined as a limit, and in this limit positive quantities could converge to zero.

In this article we give a new proof that the resolvent of $H_\mathrm{ren}(P)$ improves positivity.
We use a representation of the operator and its domain in terms of generalised boundary conditions, called interior boundary conditions, which allows us to work only with the renormalised operator and avoid approximation by operators with cutoff and the difficulties this entails.
The method of proof should be well suited for generalisations to other Hamiltonians, in particular the general Nelson-type models treated in~\cite{LaSch19,IBCrelat} (see Remark~\ref{rem:general}) and the Bogoliubov-Fröhlich Hamiltonian, whose renormalisation was recently achieved using the approach of interior boundary conditions~\cite{La19, La20}.

\section{Results}

In order to state our main results, we will first need to introduce some notation and
give the precise definition of the Hamiltonian by interior boundary conditions.
This approach to the UV problem was proposed by Teufel and Tumulka~\cite{TeTu16, TeTu20}. It was applied to the massive ($m>0$) Nelson model by Schmidt and the author~\cite{LaSch19}, and generalised to the massless case ($m=0$) by Schmidt~\cite{IBCmassless}. We reprove the self-adjointness of the Hamiltonian in Section~\ref{sect:sa} and then state the main result in Section~\ref{sect:res-pos}.

\subsection{Notation and defintion of the Hamiltonian}\label{sect:not}

For normed spaces $X,Y$ we denote by $\mathscr{B}(X,Y)$ the normed space of bounded linear operators from $X$ to $Y$, and write $\mathscr{B}(X):=\mathscr{B}(X,X)$.
For a densely defined operator $L, D(L)$ on $X$ we view $D(L)$ as a normed space with the graph norm $\|x\|_{D(L)}:=\|x\|_X + \|Lx\|_X$. 

For a unitary $U\in \mathscr{B}(L^2(\R^3))$ we define $\Gamma(U)$ as the induced unitary on $\hilb_P$ acting on $\hilb_P\uppar{n}:=\big(L^2(\R^3)\big)^{\otimes_{\mathrm{sym}} n}$ as $U^{\otimes n}$.
For a self-adjoint operator $L$ we denote by $\ud \Gamma (L)$ the self-adjoint generator of $\Gamma(\ue^{-\ui t L})$.
We will denote by $C$ various positive constants, whose value may change from one equation to another.

Let 
\begin{equation}
L_P:= (P-\ud \Gamma(k))^2 + \ud \Gamma(\omega)
\end{equation}
with $\omega(k)=\sqrt{k^2 + m^2}$, $m\geq0$.
This operator leaves the particle-number invariant and acts on an $n$-particle wavefunction as multiplication  by the non-negative function of $K=(k_1, \dots, k_n)$
\begin{equation}\label{eq:L_PK}
 L_P(K):=\Big(P-\sum_{j=1}^n k_j\Big)^2 + \sum_{j=1}^n \omega(k_j).
\end{equation}
It is thus self-adjoint on its maximal domain $D(L_P)\subset \hilb_P$ and non-negative.
For $\lambda>0$ we set, with $v(k)=g \omega(k)^{-1/2}$, $g\in \R$,
\begin{equation}
 G_\lambda^*:=-a(v)(L_P+\lambda)^{-1},
\end{equation}
where $a(v)$ is the annihilation operator that acts on (a dense subspace of) $\hilb_P\uppar{n+1}$ as
\begin{equation}
 \Big(a(v)\psi\uppar{n+1}\Big)(K)=\sqrt{n+1} \int_{\R^3} v(\xi) \psi\uppar{n+1}(K, \xi) \ud \xi.
\end{equation}
The operator $G_\lambda^*$ is bounded on $\hilb_P$ (see Lemma~\ref{lem:G}).
The action of the adjoint, which is thus also bounded, is given on $\hilb_P\uppar{n}$ by
\begin{align}
 \Big(G_\lambda \psi\uppar{n}\Big)(K)
 &=-\frac{1}{\sqrt{n+1}}\sum_{j=1}^{n+1} \frac{v(k_j) \psi \uppar n(\hat{K}_j)}{L_P(K)+\lambda},
 \label{eq:G n}
 \end{align}
 where $\hat K_j \in \R^{3(n-1)}$ denotes the vector $K$ with the entry $k_j$ removed.
 
The domain of the Hamiltonian at total momentum $P$ is 
\begin{equation}
 D(H_P)= \Big\{\psi \in \hilb_P : (1-G_\lambda)\psi \in D(L_P) \Big\}.
\end{equation}
The condition $(1-G_\lambda)\psi \in D(L_P)$ is the interior boundary condition that encodes the behaviour of $\psi$ for $k\to \infty$. Note that $\ran (G_\lambda-G_\mu)\subset D(L_P)$, so this condition is independent of $\lambda>0$.

The action of $H_P$ on its domain can be expressed as
\begin{equation}\label{eq:H lambda}
 H_P=(1-G_\lambda)^*(L_P +\lambda)(1-G_\lambda) + T_\lambda - \lambda,
\end{equation}
with the operator $T_\lambda= T_{\ud,\lambda} + T_{\mathrm{od},\lambda}$,  whose ``diagonal part'' acts on $\hilb_P\uppar{n}$, $n\in \N_0$, as the operator of multiplication by
\begin{align}\label{eq:T_d def}
 T_{\ud,\lambda}(K) %
 = &   \int_{\R^3} |v(\xi)|^2 \left(\frac{1}{\xi^2 +\omega(\xi)} - \frac{1}{L_P(K, \xi)+ \lambda} \right) \ud \xi
\end{align}
and the off-diagonal part acts as the integral operator (for $n>0$)
\begin{align}\label{eq:T_od def}
 \left(T_{\mathrm{od},\lambda}\psi\uppar{n}\right)(K)
 = -  \sum_{j=1}^{n} \int_{\R^3}  \frac{\overline{v(\xi)} v(k_{j}) \psi \uppar n(\hat{K}_j,\xi )}{L_P(K,\xi) + \lambda} \, \ud \xi.
\end{align}

To understand the connection of the Hamiltonian~\eqref{eq:H lambda} and the formal expression~\eqref{eq:H(P) formal}, we expand the former into a sum of terms that are individually not elements of $\hilb_P$, but of $D(L_P^{-1})$ (the completion of $\hilb_P$ under the norm $\|\psi\|=\|(L_P+1)^{-1}\psi\|_{\hilb_P}$).
In this sense, we have for $\psi\in D(H_P)$
\begin{align}\label{eq:H dist}
 H_P \psi 
 %
 %
 &=L_P\psi + a^*(v)\psi + A \psi,
\end{align}
where $A$ is an extension of $a(v)$, as we explain below. This shows that $H_P$ is independent of $\lambda$ and essentially acts as the formal expression~\eqref{eq:H(P) formal}, up to the choice of extension $A\supset a(v)$.

We now derive~\eqref{eq:H dist}. First note that, by boundedness of $G_\lambda$, $a(v)\in \mathscr{B}(D(L_P), \hilb_P)$, so we set $D(a(v))=D(L_P)$.
Let $D(T_\lambda):=D(L_P^\eps + \ud \Gamma(\omega)^{1/2} )\subset \hilb_P$ be the domain of $T_\lambda$ (for appropriate $\eps>0$, see Equation~\eqref{eq:T bound} below). Consider the subspace 
\begin{equation}
 D(A)_\lambda:=D(L_P) \oplus G_\lambda D(T_\lambda) \subset \hilb_P.
\end{equation}
Since, by its definition, $D(T_\lambda)$ is independent of $\lambda$, and $G_\lambda - G_\mu\in \mathscr{B}(\hilb_P, D(L_P))$ (as one easily checks using the resolvent identity), we have $D(A)_\lambda=D(A)_\mu$ for all  $\mu, \lambda>0$. We may thus drop the subscript $\lambda$, and we have $D(a(v))\subset D(A)$. We then set
\begin{equation}
 \begin{aligned}
  A:D(A) &\to \hilb_P \\
  A(\psi ,  G_\lambda\phi)&= a(v)\psi + T_\lambda\phi.
 \end{aligned}
\end{equation}
The operator $A$ is well defined and independent of $\lambda$ since 
\begin{equation}
 T_\lambda - T_\mu =  (\lambda - \mu)G_\lambda^*G_\mu\phi =a(v)(G_\lambda-G_\mu),
\end{equation}
which also follows from the resolvent identity.
For $\psi \in D(H_P)$, i.e. with $(1-G_\lambda)\psi \in D(L_P)$, we then have (see the proof of Theorem~\ref{thm:sa} for the justification of $\psi \in D(T_\lambda)$)
\begin{equation}
 A \psi = A(1-G_\lambda)\psi + AG_\lambda\psi = A\Big((1-G_\lambda)\psi ,G_\lambda \psi \Big) = a(v) (1-G_\lambda)\psi + T_\lambda\psi.
\end{equation}
 This implies~\eqref{eq:H dist}, since by definition of $G_\lambda$
\begin{align}
 H_P \psi &= (L_P +\lambda)(1-G_\lambda)\psi + a(v)(1-G_\lambda)\psi + T_\lambda\psi - \lambda\psi \notag \\
 %
 %
 &=L_P\psi + a^*(v)\psi + A \psi. \notag
\end{align}

\subsection{Self-adjointness of the Hamiltonian}\label{sect:sa}

It was proved in~\cite{LaSch19, IBCmassless} that the translation-invariant Nelson Hamiltonian (which is unitarily equivalent to the direct integral of the fibre-operators $H_P$~\cite{Can71}) is self-adjoint, bounded from below and equals the renormalised operator constructed in~\cite{nelson1964, Can71, GrWu18}. 
For the convenience of the reader, we reprove this result for the fibre Hamiltonian $H_P$, with key Lemmas provided in Appendix~\ref{app:ibc}. 

Concerning positvity, the important point is that the method of proof also yields a formula for $(H_P+\mu)^{-1}$ for large enough $\mu$, since it uses the Kato-Rellich theorem (see also~\cite{posi20} for alternative representations of the resolvent). In order to show positivity of the resolvent we will basically follow this proof with some modifications that give control over the sign of certain terms. In particular, the arguments of Section~\ref{sect:pos} also imply self-adjointness of $H_P$ (in a slightly different representation). Our result can thus almost be viewed as a corollary to the proof of self-adjointness, whereas the proof via renormalisation~\cite{miyao2019, miyao2020} requires a much finer analysis of the behaviour as $\Lambda\to \infty$. 
\begin{thm}\label{thm:sa}
The operator $H_P$ is self-adjoint on $D(H_P)$, bounded from below, and $H_P=H_\mathrm{ren}(P)$. 
\end{thm}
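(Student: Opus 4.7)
The plan is to use the representation $H_P=(1-G_\lambda)^*(L_P+\lambda)(1-G_\lambda)+T_\lambda-\lambda$ from equation~\eqref{eq:H lambda} and apply a Kato--Rellich type argument, exactly as one expects from the IBC construction. I would first invoke (the appendix version of) Lemma~\ref{lem:G} to conclude that $G_\lambda\in\mathscr{B}(\hilb_P)$ with $\|G_\lambda\|\to 0$ as $\lambda\to\infty$. In particular, for $\lambda$ large enough $1-G_\lambda$ has a bounded inverse on $\hilb_P$, so by the definition of $D(H_P)$ the map $1-G_\lambda$ is a bijection from $D(H_P)$ onto $D(L_P)$. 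This lets me realise the first summand in~\eqref{eq:H lambda} as $S^*S-\lambda$ where $S:=(L_P+\lambda)^{1/2}(1-G_\lambda)$ is a closed, densely defined operator on $D(H_P)$ with $D(S)=D(H_P)$; hence $(1-G_\lambda)^*(L_P+\lambda)(1-G_\lambda)-\lambda$ is self-adjoint on $D(H_P)$ and bounded below by $-\lambda$.

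Next I would treat $T_\lambda=T_{\ud,\lambda}+T_{\mathrm{od},\lambda}$ as a perturbation. The diagonal part~\eqref{eq:T_d def}, after using $|v(\xi)|^2=g^2/\omega(\xi)$, is estimated by an explicit integral in $\xi$; the cancellation between the two fractions produces a bound of the form $|T_{\ud,\lambda}(K)|\le C(L_P(K)^\eps+(\ud\Gamma(\omega)(K))^{1/2}+1)$ for some $\eps<1$, which is the bound alluded to in the definition of $D(T_\lambda)$. For the off-diagonal term~\eqref{eq:T_od def}, the standard Cauchy--Schwarz / pull-through manipulation (estimating $|v(\xi)v(k_j)|/(L_P(K,\xi)+\lambda)$ by splitting the denominator between the $\xi$ and $k_j$ variables) yields the same type of bound. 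Combining these and using $L_P^\eps\le \eta L_P+C_\eta$ together with $\ud\Gamma(\omega)\le L_P$, one obtains, for any $\eta>0$,
\begin{equation*}
\|T_\lambda\psi\|\le \eta\|L_P\psi\|+C_\eta\|\psi\|\qquad\text{for all }\psi\in D(L_P).
\end{equation*}
Since $1-G_\lambda$ is an isomorphism of $D(H_P)$ onto $D(L_P)$ with norm tending to $1$ as $\lambda\to\infty$, this transfers to a relative bound with respect to $(1-G_\lambda)^*(L_P+\lambda)(1-G_\lambda)$ on $D(H_P)$ that is still infinitesimal. This is also where one checks the hidden hypothesis $\psi\in D(T_\lambda)$ used earlier in deriving~\eqref{eq:H dist}, because $\psi=(1-G_\lambda)^{-1}\phi$ with $\phi\in D(L_P)$ lies in $D(L_P)+G_\lambda D(L_P)\subset D(T_\lambda)$.

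With these two ingredients in place, the Kato--Rellich theorem immediately gives that $H_P$ is self-adjoint on $D(H_P)$ and bounded from below for all $\lambda$ large enough; since $D(H_P)$ and $H_P$ are independent of $\lambda$ by the remark following its definition, this proves the first two assertions. For the identification $H_P=H_{\mathrm{ren}}(P)$, I would reduce to the corresponding statement for the translation-invariant Nelson Hamiltonian proved in~\cite{LaSch19,IBCmassless,Can71}: the construction of $D(H_P)$ and the action~\eqref{eq:H lambda} are precisely the fibre of that construction in the direct integral decomposition over total momentum, and the norm-resolvent limit $H_\Lambda-E_\Lambda\to H_{\mathrm{ren}}$ commutes with the fibre decomposition, so fibre-wise one gets $H_P=H_{\mathrm{ren}}(P)$.

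The principal obstacle is the bound on $T_\lambda$: one must exploit the explicit subtraction in~\eqref{eq:T_d def} (which is the whole point of renormalisation) to beat the logarithmic divergence of the naïve integral, and obtain a gain in a fractional power $L_P^\eps$ with $\eps<1$ rather than just $L_P$, so that the Kato--Rellich bound can be made infinitesimal. Everything else — boundedness of $G_\lambda$, the bijection property of $1-G_\lambda$, and the matching with the renormalised operator — is either direct algebra or a reference to the already established theory.
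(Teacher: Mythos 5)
Your overall strategy (Kato--Rellich applied to the representation~\eqref{eq:H lambda}, with the principal part $(1-G_\lambda)^*(L_P+\lambda)(1-G_\lambda)$ self-adjoint because $1-G_\lambda$ is boundedly invertible) is the same as the paper's, but there is a genuine gap at the decisive step. Your relative bound $\|T_\lambda\psi\|\le \eta\|L_P\psi\|+C_\eta\|\psi\|$ is only available for $\psi\in D(L_P)$, whereas in~\eqref{eq:H lambda} the perturbation $T_\lambda$ acts on $\psi\in D(H_P)$ itself, and such $\psi$ are \emph{not} in $D(L_P)$: writing $\psi=(1-G_\lambda)\psi+G_\lambda\psi$, the second piece only lies in $D(L_P^s)$ for $s<1/4$ (Lemma~\ref{lem:G}\ref{lem:G L-bound}). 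The assertion that the isomorphism $1-G_\lambda\colon D(H_P)\to D(L_P)$ ``transfers'' the infinitesimal bound is exactly where this is swept under the rug: what the isomorphism gives you is infinitesimal boundedness of $T_\lambda(1-G_\lambda)$ relative to the principal part, not of $T_\lambda$. The leftover term $T_\lambda G_\lambda\psi$ needs a separate argument, and this is where the paper uses the fine interplay of exponents: the bound $|T_{\ud,\lambda}(K)|\le C(L_P(K)+\lambda)^{\eps}$ with $\eps<1/4$ matched against $(L_P+\lambda)^sG_\lambda$ bounded for $s<1/4$, together with Lemma~\ref{lem:G}\ref{lem:G O-bound} (that $G_\lambda$ preserves $D(\ud\Gamma(\omega)^{1/2})$) and Lemma~\ref{lem:T_od}, to conclude that $T_\lambda G_\lambda$ is bounded on $D(\ud\Gamma(\omega)^{1/2})$ and hence, via $\ud\Gamma(\omega)\le L_P$ and Lemma~\ref{lem:G}\ref{lem:G inv}, infinitesimally bounded relative to~\eqref{eq:H_0}. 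You do observe that $G_\lambda D(L_P)\subset D(T_\lambda)$, so $T_\lambda\psi$ is well defined, but well-definedness is not a quantitative bound; without the estimate on $T_\lambda G_\lambda$ the Kato--Rellich hypothesis is not verified and the proof does not close. (Your factorisation $S^*S$ with $S=(L_P+\lambda)^{1/2}(1-G_\lambda)$ also needs the small check that $D(S^*S)$ equals $D(H_P)$, but that is routine.)

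A secondary, lesser issue concerns $H_P=H_\mathrm{ren}(P)$. Reducing to the translation-invariant result and ``commuting the norm-resolvent limit with the fibre decomposition'' is not automatic: norm-resolvent convergence of the direct-integral operators only controls the fibres up to a null set in $P$ (an essential supremum), so to conclude at \emph{every fixed} $P$ you need an additional argument (e.g.\ continuity of the fibre resolvents in $P$), plus a verification that the IBC fibre operator defined here really is the fibre of the IBC operator of~\cite{LaSch19,IBCmassless}. The paper avoids both points by proving norm-resolvent convergence $H_\Lambda(P)-E_\Lambda\to H_P$ directly at fixed $P$, via the factorised form of $H_\Lambda(P)+z$ with $G_{z,\Lambda}$ and $T_{z,\Lambda}$ and the convergences $G_{\mu,\Lambda}\to G_\mu$, $T_{\mu,\Lambda}\to T_\mu$ together with a uniform bound on $(H_\Lambda(P)+z)^{-1}(1-G_{\mu,\Lambda})^*(L_P+\mu)$. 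Either route can work, but as stated yours needs these extra steps spelled out.
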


\begin{proof} 
 First, $(1-G_\lambda)$ has a bounded inverse (see Lemma~\ref{lem:G}), so $D(H_P)$ is dense. Furthermore, for $\lambda>0$
\begin{equation}\label{eq:H_0}
 (1-G_\lambda^*)(L_P + \lambda) (1-G_\lambda)
\end{equation}
is invertible and thus self-adjoint on $D(H_P)$ and non-negative. 

Self-adjointness of $H_P$ will follow from the Kato-Rellich theorem if we can show that $T_\lambda$ is bounded relative to this operator with infinitesimal bound. 
The key is that, by Lemma~\ref{lem:T_d} and Lemma~\ref{lem:T_od}, we have for any $\eps>0$
\begin{equation}\label{eq:T bound}
 \|T_\lambda  \psi\|_{\hilb_P} \leq C \Big( \|(L_P+\lambda)^{\eps}\psi\|_{\hilb_P} + \| \ud\Gamma(\omega)^{1/2} \psi\|_{\hilb_P}\Big).
\end{equation}
This implies that $T_\lambda(1-G_\lambda)$ is infinitesimally bounded relative to~\eqref{eq:H_0}, by boundedness of $(1-G_\lambda^*)^{-1}$. 
Taking $\eps<1/4$ it follows from Lemma~\ref{lem:G}\ref{lem:G L-bound} and Lemma~\ref{lem:G}\ref{lem:G O-bound}
that $T_\lambda G_\lambda$ is a bounded operator on $D(\ud \Gamma(\omega)^{1/2})$.
Since $(1-G_\lambda)$ is an isomorphism on $D(\ud \Gamma(\omega)^{1/2})$ by Lemma~\ref{lem:G}\ref{lem:G inv}, the operator $T_\lambda G_\lambda$ is bounded relative to $(1-G_\lambda)^*\ud \Gamma(\omega)^{1/2}(1-G_\lambda)$ and thus infinitesimally  bounded relative to~\eqref{eq:H_0}. This implies self-adjointness and the lower bound.

The equality $H_P=H_\mathrm{ren}(P)$ can be proved by showing that $H_P$ is the norm-resolvent limit of $H_\Lambda(P)-E_\Lambda$, where 
\begin{equation}
 H_\Lambda(P)= L_P + a(v_\Lambda) + a^*(v_\Lambda),
\end{equation}
with $v_\Lambda(k)=g\omega(k)^{-1/2} 1(|k|\leq \Lambda)$, and
\begin{equation}
 E_\Lambda = - \langle v_\Lambda, (k^2+\omega(k))^{-1} v_\Lambda \rangle_{L^2(\R^3)}.
\end{equation}
To see this, define $G_{z,\Lambda}:=-(L_P+z)^{-1}a^*(v_\Lambda)$, for $z\in \C\setminus\R_-$. Then rewrite 
\begin{equation}
 H_\Lambda(P) +z = (1-G_{\bar z,\Lambda})^*(L_P + z)(1-G_{z,\Lambda}) - a(v_\Lambda)(L_P + z)^{-1}a^*(v_\Lambda),
\end{equation}
and note that
\begin{equation*}
 - a(v_\Lambda)(L_P + z)^{-1}a^*(v_\Lambda) - E_\Lambda = T_{\ud, z, \Lambda} + T_{\uod, z, \Lambda}=:T_{z, \Lambda}, 
\end{equation*}
with $T_{\ud, z, \Lambda}$, $T_{\uod, z, \Lambda}$ defined as in~\eqref{eq:T_d def},~\eqref{eq:T_od def} with $v$ replaced by $v_\Lambda$ (and $\lambda$ by $z$).
Writing out the resolvent difference, we obtain, for $z\in \C\setminus \R$ and $\mu>0$,
\begin{align}
 (H_P &+ z)^{-1} - (H_\Lambda(P) -E_\Lambda+ z)^{-1} \\
 = &(H_\Lambda(P) -E_\Lambda+ z)^{-1}\big(H_\Lambda(P) - H_P\big)(H_P + z)^{-1} \notag \\
 = &(H_\Lambda(P) -E_\Lambda + z)^{-1}(1-G_{\mu, \Lambda})^*(L_P+\mu) (G_\mu-G_{\mu,\Lambda})(H_P + z)^{-1}\notag \\
 &+ (H_\Lambda(P) -E_\Lambda+ z)^{-1}(G_\mu-G_{\mu, \Lambda})^*(L_P+\mu) (1-G_\mu)(H_P + z)^{-1}\notag \\
 &+ (H_\Lambda(P) -E_\Lambda+ z)^{-1}(T_{\mu, \Lambda}- T_\mu)(H_P + z)^{-1}. \notag
\end{align}
The convergence to zero of this expression in norm then follows from
\begin{align}
G_{\mu, \Lambda} &\stackrel{\Lambda \to \infty}{\rightarrow} G_\mu \text{ in } \mathscr{B}(\hilb_P) \\
 T_{\mu,\Lambda} &\stackrel{\Lambda \to \infty}{\rightarrow} T_\mu \text{ in } \mathscr{B}(D(T_\mu), \hilb_P),
\end{align}
and the fact that $(H_\Lambda(P) + z)^{-1}(1-G_{\mu,\Lambda})^*(L_P+\mu)$ is bounded uniformly in $\Lambda$, which can be proved along the lines of Lemmas~\ref{lem:G},~\ref{lem:T_d},~\ref{lem:T_od} (see~\cite{IBCmassless, IBCrelat} for details).
\end{proof}

\begin{rem}\label{rem:general}
 The results of~\cite{LaSch19, IBCmassless} apply to general Nelson-type models in $d\leq 3 $ dimensions with $v, \omega$ satisfying $|v(k)|\leq |g| |k|^{-\alpha}$, $\omega(k)\geq (m^2+k^2)^{\beta/2}$ with appropriate conditions on $\alpha, \beta$ (see also~\cite{IBCrelat} for models with relativistic particles). Our method also works for these more general models under the additional hypothesis that $\beta/3 > d-2-2\alpha$ (which ensures that $T_{\ud, \lambda}G_\lambda$ is bounded).
\end{rem}

\subsection{Positivity of the resolvent of the Nelson Hamiltonian}\label{sect:res-pos}

In this section we formulate and prove our main result.  
\begin{definition}
 We define the cone of positive elements $ \mathcal{C}_+ \subset \hilb_P$ by
 \begin{equation}
 \mathcal{C}_+:=\{ \psi \in \hilb_P| \forall n\in \N_0 : \psi\uppar{n} \geq 0 \text{ almost everywhere} \}.
\end{equation}
We write that $\psi\geq 0$ if $\psi\in \mathcal{C}_+$ and $\psi>0$ if $\langle \psi, \phi \rangle > 0$ for all non-zero $\phi\geq 0$.
An operator $A\in \mathscr{B}(\hilb_P)$ preserves positivity if $A\mathcal{C}_+\subset \mathcal{C}_+$, and improves positivity if $A\psi>0$ for all $\psi \in \mathcal{C}_+\setminus\{0\}$.
\end{definition}

We will show that the resolvent of $H_P$ improves positivity if $g<0$, that is if $v(k)$ is strictly negative.

\begin{thm}\label{thm:H_P pos}
 Let $P\in \R^3$, $m\geq 0$ and $g<0$. Then for all $\lambda>-\inf\sigma(H_P)$ the resolvent  $(H_P+\lambda)^{-1}$ improves positivity with respect to  $\mathcal{C}_+$. 
\end{thm}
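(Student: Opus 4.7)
Since $g<0$, the form factor $v=g\omega^{-1/2}$ is almost-everywhere negative, which by~\eqref{eq:G n} turns $G_\lambda$ and $G_\lambda^*$ into integral operators with strictly positive kernels. Consequently these operators, together with all their iterates $G_\lambda^k$ and $(G_\lambda^*)^k$, map any non-zero element of $\mathcal{C}_+$ to a function that is almost-everywhere strictly positive. The same sign analysis shows that $-T_{\mathrm{od},\lambda}$ has a strictly positive integral kernel, since that kernel equals the non-negative product $(-v(\xi))(-v(k_j))/(L_P(K,\xi)+\lambda)$. The only ingredient of $H_P$ whose sign is not manifest is the multiplication operator $T_{\mathrm{d},\lambda}(K)$.

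The plan is to bypass the indefiniteness of $T_{\mathrm{d},\lambda}$ via the algebraic identity, verified by direct computation on each $n$-particle sector,
\begin{equation*}
 G_\lambda^*(L_P+\lambda)G_\lambda = T_{\mathrm{d}}^+ - T_{\mathrm{od},\lambda},\qquad T_{\mathrm{d}}^+(K):=\int_{\R^3}\frac{|v(\xi)|^2}{L_P(K,\xi)+\lambda}\,\ud\xi,
\end{equation*}
combined with the decomposition $T_{\mathrm{d},\lambda}=T_{\mathrm{d}}^{(0)}-T_{\mathrm{d}}^+$, where $T_{\mathrm{d}}^{(0)}:=\int|v|^2/(\xi^2+\omega(\xi))\,\ud\xi$. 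This yields $T_\lambda = T_{\mathrm{d}}^{(0)}-G_\lambda^*(L_P+\lambda)G_\lambda$, and substituting into~\eqref{eq:H lambda} the two copies of $G_\lambda^*(L_P+\lambda)G_\lambda$ cancel, leaving the formal expression
\begin{equation*}
 H_P+\lambda = (L_P+\lambda+T_{\mathrm{d}}^{(0)}) - \bigl((L_P+\lambda)G_\lambda + G_\lambda^*(L_P+\lambda)\bigr),
\end{equation*}
the first summand being multiplication by a strictly positive function and the two subtracted operators---the rigorous incarnations of $-a^*(v)$ and $-a(v)$---having strictly positive integral kernels.

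The main obstacle is that this representation is only formal: the counter-term $T_{\mathrm{d}}^{(0)}$ and the composition $(L_P+\lambda)G_\lambda$ are individually UV-divergent, so one cannot directly Neumann-expand $(H_P+\lambda)^{-1}$ in the subtracted perturbation. We circumvent this by working with the bounded IBC factorisation
\[
 (H_P+\lambda)^{-1} = (1-G_\lambda)^{-1}\bigl[(L_P+\lambda)+S_\lambda\bigr]^{-1}(1-G_\lambda^*)^{-1},\quad S_\lambda:=(1-G_\lambda^*)^{-1}T_\lambda(1-G_\lambda)^{-1},
\]
and expanding the middle operator as a Neumann series $\sum_{k\ge 0}(L_P+\lambda)^{-1}(-S_\lambda(L_P+\lambda)^{-1})^k$, convergent for $\lambda$ large by the bound~\eqref{eq:T bound}. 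The crucial step is to show that $-S_\lambda(L_P+\lambda)^{-1}$ preserves positivity: substituting the decomposition $T_\lambda=T_{\mathrm{d}}^{(0)}-G_\lambda^*(L_P+\lambda)G_\lambda$ (implemented with a UV cutoff that drops out at the end) and using the telescoping relations $(1-G_\lambda^*)^{-1}G_\lambda^*=(1-G_\lambda^*)^{-1}-I$ and $G_\lambda(1-G_\lambda)^{-1}=(1-G_\lambda)^{-1}-I$, the individually divergent pieces combine into a bounded positive combination of compositions of the positivity-preserving operators $G_\lambda,G_\lambda^*,(L_P+\lambda)^{-1}$ and multiplications by non-negative functions. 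This is the sign-refined version of the Kato--Rellich argument of Section~\ref{sect:sa}, and the extension from large $\lambda$ to all $\lambda>-\inf\sigma(H_P)$ is standard once positivity-preservation is established at one value.

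Positivity improvement then follows from the outer factors. For any non-zero $\psi\in\mathcal{C}_+$ choose $n_0$ with $\psi\uppar{n_0}\ne 0$; then in $(1-G_\lambda^*)^{-1}\psi=\sum_{j\ge 0}(G_\lambda^*)^j\psi$ the summand $(G_\lambda^*)^{n_0}\psi\uppar{n_0}$ is a strictly positive element of $\hilb_P\uppar{0}\cong\C$, by the strict positivity of the kernel of $G_\lambda^*$ and the non-vanishing of $\psi\uppar{n_0}$. The middle factor preserves positivity and, through its zeroth-order Neumann term (multiplication by the strictly positive function $1/(L_P(\emptyset)+\lambda)$), keeps the vacuum-sector contribution strictly positive. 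Applying $(1-G_\lambda)^{-1}=\sum_{j\ge 0}G_\lambda^j$ then converts this positive vacuum number into a function that is strictly positive almost everywhere on every sector $\hilb_P\uppar{n}$, since each iterate $G_\lambda^n$ has a strictly positive kernel. Hence $(H_P+\lambda)^{-1}\psi$ is a.e.\ strictly positive on every sector, i.e.\ $(H_P+\lambda)^{-1}\psi>0$.
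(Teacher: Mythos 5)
Your sign analysis of $G_\lambda$, $G_\lambda^*$, $(1-G_\lambda)^{-1}$ and $-T_{\uod,\lambda}$ is correct, the factorisation $(H_P+\lambda)^{-1}=(1-G_\lambda)^{-1}\bigl[(L_P+\lambda)+S_\lambda\bigr]^{-1}(1-G_\lambda^*)^{-1}$ is a valid rewrite of~\eqref{eq:H lambda}, and your final step showing that strict positivity on the vacuum sector propagates through $(1-G_\lambda)^{-1}$ reproduces the argument of Lemma~\ref{lem:R_0}. The gap is the central claim that $-S_\lambda(L_P+\lambda)^{-1}$ preserves positivity: the decomposition you propose does not exhibit this, and the statement that the divergent pieces \textquotedblleft combine into a bounded positive combination of compositions of positivity-preserving operators\textquotedblright\ is exactly the nontrivial assertion your argument was supposed to prove.

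Concretely, writing $A=(1-G_\lambda^*)^{-1}$, $B=(1-G_\lambda)^{-1}$, $R=(L_P+\lambda)^{-1}$, $c=T_{\mathrm{d}}^{(0)}$, the substitution $T_\lambda=c-G_\lambda^*R^{-1}G_\lambda$ together with the telescoping identities $A-1=AG_\lambda^*$ and $B-1=G_\lambda B$ gives (with a cutoff in place)
\begin{equation*}
 -S_\lambda R \;=\; -cABR + (A-1)R^{-1}(B-1)R \;=\; A\bigl(R^{-1}-c\bigr)BR \;-\; A \;-\; R^{-1}BR \;+\; 1.
\end{equation*}
Here $-A$ and $-R^{-1}BR$ are \emph{minus} positivity-preserving operators, hence positivity-reversing, and the multiplication operator $R^{-1}-c=L_P+\lambda-T_{\mathrm{d}}^{(0)}$ is negative on a momentum region that grows without bound as the cutoff is removed, since $T_{\mathrm{d},\Lambda}^{(0)}\to\infty$. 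Nothing has cancelled into a non-negative combination of positivity-preservers: the right-hand side has pieces of both signs that remain individually divergent, and the sign of the bounded limit cannot be read off from this expansion. This is precisely the obstruction singled out in Remark~\ref{rem:forms}: the rewrite $T_\lambda=\mathrm{const}-a(v)(L_P+\lambda)^{-1}a^*(v)$ only gives a clean sign when $\int|v(\xi)|^2(\xi^2+\omega(\xi))^{-1}\,\ud\xi<\infty$, which fails for the Nelson form factor.

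What the paper does instead is decompose $T_{\mathrm{d},\lambda}(K)$ pointwise into its \emph{finite} positive and negative parts $\tau_{\pm,\lambda}(K)$, keeping every intermediate quantity convergent, and then absorb the positive part into the free resolvent by passing from $G_\lambda$ to $F_\lambda^*=-a(v)(L_P+\tau_{+,\lambda}+\lambda)^{-1}$. In the resulting representation of Proposition~\ref{prop:H F}, the diagonal remainder $S_{\ud,\lambda}$ equals $\tau_{-,\lambda}\leq 0$ plus a uniformly \emph{bounded} non-negative function (Lemma~\ref{lem:S neg}), so $-(S_\lambda-\mu)$ preserves positivity for a fixed finite shift $\mu$. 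The essential idea your proposal is missing is therefore not the Faris-type perturbation step, which you have, but this modification of the IBC operator that converts the sign-indefinite diagonal multiplication $T_{\mathrm{d},\lambda}$ into a remainder bounded from above by a constant without ever introducing a divergent counter-term.
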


If instead $g>0$ and $v$ is positive all our results hold with positivity defined by the cone
\begin{equation}
 \mathcal{C}_-:=\{  \psi \in \hilb_P| \forall n\in \N_0 : (-1)^n\psi\uppar{n} \geq 0 \text{ almost everywhere} \},
\end{equation}
since the operators with interactions $v$ and $-v$ are unitarily equivalent via $U=\Gamma(-1)$.

To see why Theorem~\ref{thm:H_P pos} might hold, first note that $(L_P+\lambda)^{-1}$ preserves positivity since it is obtained from a multiplication operator by a positive function.
Using the sign of $v$, one easily sees that $G_\lambda$, $(1-G_\lambda)^{-1}=\sum_{j=0}^\infty G_\lambda^j$ (for $\lambda \gg 1$) and their adjoints preserve positivity by  inspection of the  formula~\eqref{eq:G n}.  
With this, the inverse of
\begin{equation}
 (1-G_\lambda)^*(L_P + \lambda)(1-G_\lambda)
\end{equation}
preserves positivity, and it is not difficult to show that it improves positivity (see Lemma~\ref{lem:R_0}).
In the formula~\eqref{eq:H lambda} for $H_P+\lambda$, this is perturbed by the operator $T_\lambda$. The off-diagonal part $T_{\mathrm{od},\lambda}$ is an integral operator with negative kernel, so $-T_{\mathrm{od},\lambda}$ preserves positivity. This can be dealt with by a perturbative argument due to Faris~\cite{faris1972}, the essential point of which is that the Neumann series $(1+A)^{-1}=\sum_{j=0}^\infty (-A)^j$ is positivity-preserving if $-A$ is. However, the diagonal part $T_{\mathrm{d},\lambda}$ is a multiplication operator by a function that does take positive values.
This is the main obstacle to turning this argument into a rigorous proof. It will be dealt with by slightly changing the representation of $H_P$, as explained in Section~\ref{sect:repr}.

\begin{rem}\label{rem:forms}
 If, in deviating from our hypothesis, 
\begin{equation}
 \int_{\R^3} \frac{|v(k)|^2}{k^2 + \omega(k)}\ud k < \infty,
\end{equation}
as is the case for the H. Fr\"ohlich's polaron model where $\omega\equiv 1$ and $v(k)\propto 1/k$, the formal Hamiltonian makes sense as a quadratic form. Self-adjointness can be proved as in~\cite[Sect.2]{LaSch19}, and the operator $T_\lambda$ is simply given by
\begin{equation}
 T_\lambda=-a(v)(L_P+\lambda)^{-1}a^*(v).
\end{equation}
Hence $-T_\lambda$ preserves positivity and the proof that the resolvent of $H_P$ improves positivity is rather straightforward starting from there.
\end{rem}

\section{Proof of positivity}
\subsection{A modified representation of $H_P$}\label{sect:repr}

We will deal with the positive part of $T_{\ud,\lambda}$ by absorbing it with $L_P$ and modifying the representation of $H_P$. A similar idea was used in the renormalisation of more singular Hamiltonians of Nelson type~\cite{La19, La20}.

For arbitrary $n\in \N_0$ (which we suppress in the notation) and $K\in \R^{3n}$ let 
\begin{equation}
\tau_{+, \lambda}(K):= (T_{\ud, \lambda}(K))_+
\end{equation}
be the positive part of the function $T_{\ud, \lambda}(K)$ given in~\eqref{eq:T_d def}.
By scaling (see Lemma~\ref{lem:T_d} for details) we have 
\begin{equation}\label{eq:tau-bound}
\tau_{+,\lambda}(K) \leq C (L_P(K)+\lambda)^{\eps} 
\end{equation}
for any $\eps>0$ and some $C$. Thus for every $\lambda>0$ and $\eps>0$, $\tau_{+,\lambda}(K)$ defines a bounded operator from $D(L_P^{\eps})$ to $\hilb_P$. We denote this operator by $\tau_{+,\lambda}$ and define $\tau_{-,\lambda}$ as 
\begin{equation}
\tau_{-,\lambda}:=T_{\ud,\lambda} - \tau_{+,\lambda}\leq 0.
\end{equation}
For  $\lambda > 0$ we now define $F_\lambda$, a modification of $G_\lambda$, as the adjoint of
\begin{equation}
  F_\lambda^*=-a(v) (L_P + \tau_{+,\lambda} + \lambda)^{-1}.
\end{equation}

\begin{lem}\label{lem:F}
The family of operators $F_\lambda$ has the following properties:
\begin{enumerate}[label=\alph*)]
 \item\label{lem:F B} $F_\lambda$ is bounded;
 \item\label{lem:F bound} $\ran F_\lambda \subset D(L_P^s)$ for all $0\leq s<1/4$ and for all $\lambda_0>0$
 \begin{equation*}
  \sup_{\lambda \geq \lambda_0} \|(L_P+\lambda)^s F_\lambda\|_{\mathscr{B}(\hilb_P)} <\infty.
 \end{equation*}
 \item\label{lem:F O-bound}  $F_\lambda$ maps $D(\ud \Gamma(\omega)^{1/2})$ to itself for all $\lambda_0>0$ there exists $C>0$ so that for all $\lambda\geq \lambda_0$ and $\psi\in D(\ud \Gamma(\omega)^{1/2})$
\begin{equation*}
 \|\ud \Gamma(\omega)^{1/2} F_\lambda \psi\|_{\hilb_P} \leq C \lambda^{-1/4} \|\ud \Gamma(\omega)^{1/2} \psi\|_{\hilb_P};
\end{equation*}
 \item\label{lem:F inv} There exists $\lambda_0>0$ so that for $\lambda>\lambda_0$, $1-F_\lambda$ is invertible on $\hilb_P$ and $D(\ud \Gamma(\omega)^{1/2})$ with 
 \begin{equation*}
  \sup_{\lambda>\lambda_0} \Big( \|(1-F_\lambda)^{-1}\|_{\mathscr{B}(\hilb_P)} + \|(1-F_\lambda)^{-1}\|_{\mathscr{B}(D(\ud \Gamma(\omega)^{1/2}))}\Big)<\infty
 \end{equation*}
 \item\label{lem:F equiv} $D(H_P)=(1-F_\lambda)^{-1} D(L_P)$.
\end{enumerate}
\end{lem}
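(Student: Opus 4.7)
The central observation is that, on each fixed-$n$ sector, all the operators appearing in the definition of $F_\lambda$ are multiplications by functions of $K$ and therefore commute. This suggests factorising
\begin{equation*}
  (L_P + \tau_{+,\lambda} + \lambda)^{-1} = (L_P + \lambda)^{-1} M_\lambda, \qquad M_\lambda := \frac{L_P+\lambda}{L_P + \tau_{+,\lambda} + \lambda},
\end{equation*}
where $M_\lambda$ is multiplication by a function with values in $(0,1]$, hence a contraction on $\hilb_P$. Taking adjoints in $F_\lambda^* = G_\lambda^* M_\lambda$ then yields the clean identity
\begin{equation*}
  F_\lambda = M_\lambda G_\lambda,
\end{equation*}
with $M_\lambda$ acting on the $(n+1)$-particle sector into which $G_\lambda$ maps. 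In effect, $F_\lambda$ is $G_\lambda$ post-composed with a contraction that commutes with both $L_P^s$ and $\ud \Gamma(\omega)^{1/2}$.

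With this representation, parts (a)--(c) are immediate: since $\|M_\lambda\|\leq 1$ on $\hilb_P$, on $D(L_P^s)$, and on $D(\ud\Gamma(\omega)^{1/2})$ alike, each bound on $F_\lambda$ inherits from the corresponding bound on $G_\lambda$ provided by Lemma~\ref{lem:G}. For (d), combining (b) with $\|(L_P+\lambda)^{-s}\|_{\mathscr{B}(\hilb_P)}\leq \lambda^{-s}$ yields $\|F_\lambda\|_{\mathscr{B}(\hilb_P)} \leq C\lambda^{-s}$ for any $s\in(0,1/4)$, and (c) does the same for the relevant operator norm on $D(\ud\Gamma(\omega)^{1/2})$; both norms drop below one for $\lambda$ large enough, so $(1-F_\lambda)^{-1}$ is produced by a Neumann series with the stated uniform bound.

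The heart of the lemma is (e). My plan is to write
\begin{equation*}
  (1-F_\lambda)\psi - (1-G_\lambda)\psi = (1-M_\lambda) G_\lambda \psi = \frac{\tau_{+,\lambda}}{L_P + \tau_{+,\lambda} + \lambda}\, G_\lambda \psi
\end{equation*}
and show that this difference lies in $D(L_P)$ for \emph{every} $\psi\in\hilb_P$; comparing with the defining condition $D(H_P) = \{\psi : (1-G_\lambda)\psi\in D(L_P)\}$ then gives $D(H_P) = (1-F_\lambda)^{-1}D(L_P)$. To that end, the multiplier $L_P\,\tau_{+,\lambda}/(L_P+\tau_{+,\lambda}+\lambda)$ is pointwise dominated by $\tau_{+,\lambda}$, and hence by $C(L_P+\lambda)^\eps$ via~\eqref{eq:tau-bound}, while $G_\lambda$ maps $\hilb_P$ into $D(L_P^\eps)$ for $\eps<1/4$ by Lemma~\ref{lem:G}\ref{lem:G L-bound}; any such $\eps$ places the image in $\hilb_P$, as required. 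The only delicate point I anticipate is matching the exponent $\eps$ in the $\tau_{+,\lambda}$-bound with the $G_\lambda$-regularity, but since both margins sit strictly below $1/4$, there is room to spare; all other parts of the lemma follow cleanly from the factorisation.
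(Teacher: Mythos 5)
Your factorisation $F_\lambda = M_\lambda G_\lambda$ with $M_\lambda = (L_P+\lambda)(L_P+\tau_{+,\lambda}+\lambda)^{-1}$ is exactly the paper's starting point (the paper writes the same resolvent identity as $F_\lambda^* = G_\lambda^*\bigl(1-\tau_{+,\lambda}(L_P+\tau_{+,\lambda}+\lambda)^{-1}\bigr)$), so the argument is essentially theirs, but you extract more from it: by observing that $M_\lambda$ is a \emph{contraction} commuting with $L_P^s$ and $\ud\Gamma(\omega)^{1/2}$ on each $n$-particle sector, you obtain parts~\ref{lem:F B}--\ref{lem:F O-bound} directly from Lemma~\ref{lem:G} without ever invoking~\eqref{eq:tau-bound}, whereas the paper goes via the difference $G_\lambda-F_\lambda=(L_P+\tau_{+,\lambda}+\lambda)^{-1}\tau_{+,\lambda}G_\lambda$ and needs the boundedness of $\tau_{+,\lambda}G_\lambda$ (hence~\eqref{eq:tau-bound} with $\eps<1/4$) already for~\ref{lem:F bound} and~\ref{lem:F O-bound}. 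Your route is thus marginally more economical for~\ref{lem:F B}--\ref{lem:F inv}; the estimate~\eqref{eq:tau-bound} still enters in~\ref{lem:F equiv}, where both proofs coincide: writing $(G_\lambda-F_\lambda)\psi=(1-M_\lambda)G_\lambda\psi$ with $1-M_\lambda=\tau_{+,\lambda}(L_P+\tau_{+,\lambda}+\lambda)^{-1}$ and using $L_P\tau_{+,\lambda}/(L_P+\tau_{+,\lambda}+\lambda)\le\tau_{+,\lambda}\le C(L_P+\lambda)^\eps$ together with Lemma~\ref{lem:G}\ref{lem:G L-bound} shows $\ran(G_\lambda-F_\lambda)\subset D(L_P)$, which is precisely what the paper states as~\eqref{eq:GF diff}. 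One small remark: for~\ref{lem:F inv} you also need the $\hilb_P$-norm bound $\|F_\lambda\|\le C\lambda^{-s}$, not just the $D(\ud\Gamma(\omega)^{1/2})$-graph-norm bound, since the graph norm contains the $\hilb_P$-norm as a summand; your derivation of~\ref{lem:F bound} gives that too, so nothing is missing.
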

\begin{proof}
Statements~\ref{lem:F B}--\ref{lem:F O-bound} are proved by reduction to the corresponding properties of $G_\lambda$. By~\eqref{eq:tau-bound} we have for $\psi \in \hilb_P$
 \begin{equation}
  \norm{\tau_{+,\lambda}(L_P + \tau_{+, \lambda} + \lambda)^{-1} \psi}_{\hilb_P} \leq C \norm{ (L_P+\lambda)^{-1 + \eps}  \psi}_{\hilb_P}.
 \end{equation}
The resolvent formula gives
 \begin{align}
  F_\lambda^*=G_\lambda^*\Big(1-\tau_{+,\lambda}(L_P + \tau_{+,\lambda} + \lambda)^{-1}\Big),
 \end{align}
 which thus implies~\ref{lem:F B}, since $G_\lambda$ is bounded (see Lemma~\ref{lem:G}\ref{lem:G-a}). 
The difference of the adjoints is then
\begin{equation}\label{eq:GF diff}
 G_\lambda - F_\lambda=(L_P + \tau_{+,\lambda} + \lambda)^{-1}\tau_{+,\lambda} G_\lambda.
\end{equation}
By Lemma~\ref{lem:G}\ref{lem:G L-bound}, $(L_P+\lambda)^{s}G_\lambda$ is bounded, uniformly in $\lambda$.
Thus by taking $\eps=s<1/4$ in~\eqref{eq:tau-bound}, $\tau_{+,\lambda} G_\lambda$ is bounded.
This implies ~\ref{lem:F bound} and~\ref{lem:F O-bound} since $L_P+\tau_+\geq L_P$ and thus $(L_P+\lambda)(G_\lambda-F_\lambda)$ is a bounded operator, uniformly in $\lambda$.

Statement~\ref{lem:F inv} follows from~\ref{lem:F bound} and~\ref{lem:F O-bound} by Neumann series, as in Lemma~\ref{lem:G}\ref{lem:G inv}.

Finally,~\ref{lem:F equiv} follows from the fact that $\ran \left(G_\lambda - F_\lambda\right)\subset D(L_P)$ as proved by~\eqref{eq:GF diff}.
 \end{proof}

\begin{prop}\label{prop:H F}
For every $\lambda > 0$ we have the identity
 \begin{equation*}
H_P=(1-F_\lambda)^*(L_P+ \tau_{+,\lambda} +\lambda)  (1-F_\lambda) + S_\lambda-\lambda
 \end{equation*}
with $S_\lambda = S_{\ud, \lambda} + S_{\uod,\lambda}$, $D(S_\lambda)=D(T_\lambda)$, given by
\begin{align*}
&\left(S_{\ud, \lambda}\psi\uppar{n}\right)(K) \\
&= \left(\tau_{-,\lambda} (K) + \int_{\R^3}  \frac{|v(\xi)|^2 \tau_{+,\lambda} (K, \xi)}{(L_P(K,\xi)+\lambda)(L_P(K,\xi) + \tau_{+,\lambda}(K, \xi)+\lambda)} \ud \xi\right)\psi\uppar{n}(K)
\end{align*}
and (for $n>0$, while $S_{\uod,\lambda}\psi\uppar{0}=0$) 
\begin{align*}
 \left(S_{\uod,\lambda}\psi\uppar{n}\right)(K)
 = - \sum_{j=1}^{n} \int_{\R^3}  \frac{\overline{v(\xi)} v(k_{j}) \psi \uppar n(\hat{K}_j,\xi )}{L_P(K,\xi) + \tau_{+,\lambda}(K, \xi) + \lambda} \, \ud \xi.
\end{align*}
\end{prop}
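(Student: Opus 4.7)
The plan is to verify the claimed identity pointwise for $\psi\in D(H_P)$ by algebraic manipulations starting from the representation~\eqref{eq:H lambda}, using only the resolvent-type relation~\eqref{eq:GF diff} as structural input. By Lemma~\ref{lem:F}\ref{lem:F equiv} we have $(1-F_\lambda)\psi\in D(L_P)$, so the expression $(1-F_\lambda)^*(L_P+\tau_{+,\lambda}+\lambda)(1-F_\lambda)\psi$ is well defined (with $\tau_{+,\lambda}$ acting on $D(L_P)$ via~\eqref{eq:tau-bound}), and $S_\lambda\psi$ is defined because $D(H_P)\subset D(T_\lambda)=D(S_\lambda)$ as in the proof of Theorem~\ref{thm:sa}.

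The first step is to derive two algebraic identities. Multiplying~\eqref{eq:GF diff} on the left by $L_P+\lambda$ and expanding $(L_P+\lambda)(L_P+\tau_{+,\lambda}+\lambda)^{-1}=1-\tau_{+,\lambda}(L_P+\tau_{+,\lambda}+\lambda)^{-1}$ gives, as bounded operators on $\hilb_P$,
\begin{equation*}
(L_P+\lambda)(G_\lambda-F_\lambda)=\tau_{+,\lambda}F_\lambda,
\end{equation*}
whose adjoint is $(G_\lambda-F_\lambda)^*(L_P+\lambda)=F_\lambda^*\tau_{+,\lambda}$ on $D(L_P)$. Then writing
\begin{equation*}
(L_P+\tau_{+,\lambda}+\lambda)(1-F_\lambda)-(L_P+\lambda)(1-G_\lambda)=(L_P+\lambda)(G_\lambda-F_\lambda)+\tau_{+,\lambda}(1-F_\lambda)
\end{equation*}
and substituting the previous identity shows that, on $D(H_P)$,
\begin{equation*}
(L_P+\tau_{+,\lambda}+\lambda)(1-F_\lambda)\psi=(L_P+\lambda)(1-G_\lambda)\psi+\tau_{+,\lambda}\psi.
\end{equation*}

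Next I multiply this last identity on the left by $(1-F_\lambda)^*=(1-G_\lambda)^*+(G_\lambda-F_\lambda)^*$, apply $(G_\lambda-F_\lambda)^*(L_P+\lambda)=F_\lambda^*\tau_{+,\lambda}$ to the cross term, and use the symmetry $F_\lambda^*\tau_{+,\lambda}G_\lambda=G_\lambda^*\tau_{+,\lambda}F_\lambda$ (both sides reduce to $G_\lambda^*\tau_{+,\lambda}G_\lambda-G_\lambda^*\tau_{+,\lambda}(L_P+\tau_{+,\lambda}+\lambda)^{-1}\tau_{+,\lambda}G_\lambda$ after substituting $F_\lambda=[1-(L_P+\tau_{+,\lambda}+\lambda)^{-1}\tau_{+,\lambda}]G_\lambda$ and its adjoint). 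The result simplifies to
\begin{equation*}
(1-F_\lambda)^*(L_P+\tau_{+,\lambda}+\lambda)(1-F_\lambda)\psi=(1-G_\lambda)^*(L_P+\lambda)(1-G_\lambda)\psi+(\tau_{+,\lambda}-G_\lambda^*\tau_{+,\lambda}F_\lambda)\psi,
\end{equation*}
so in view of~\eqref{eq:H lambda} the proposition reduces to the single operator identity $T_\lambda-S_\lambda=\tau_{+,\lambda}-G_\lambda^*\tau_{+,\lambda}F_\lambda$.

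To verify this I use $G_\lambda^*=-a(v)(L_P+\lambda)^{-1}$ and $F_\lambda=-(L_P+\tau_{+,\lambda}+\lambda)^{-1}a^*(v)$ to rewrite $G_\lambda^*\tau_{+,\lambda}F_\lambda=a(v)(L_P+\lambda)^{-1}\tau_{+,\lambda}(L_P+\tau_{+,\lambda}+\lambda)^{-1}a^*(v)$, and compute its action on $\psi\uppar{n}$ as in the derivation of the formula for $T_\lambda$: the $(n+1)$-th self-contraction produces the integral piece of $S_{\ud,\lambda}-\tau_{-,\lambda}$, while the sum over $j\leq n$ reproduces $T_{\uod,\lambda}-S_{\uod,\lambda}$. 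Combined with $T_{\ud,\lambda}=\tau_{+,\lambda}+\tau_{-,\lambda}$ this gives the required equality. The main obstacle lies precisely in this last bookkeeping step---cleanly separating the self-contraction from the true off-diagonal sum and matching each piece against the asymmetric splittings defining $S_{\ud,\lambda}$ and $S_{\uod,\lambda}$. The remaining manipulations are routine algebra on bounded operators, justified by Lemmas~\ref{lem:G} and~\ref{lem:F} together with~\eqref{eq:tau-bound}.
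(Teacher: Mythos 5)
Your approach is essentially the same as the paper's, just organized in reverse order: you carry out the operator algebra first (reducing the proposition to the identity $T_\lambda-S_\lambda=\tau_{+,\lambda}-G_\lambda^*\tau_{+,\lambda}F_\lambda$) and do the Fock-space bookkeeping last, whereas the paper first unpacks $S_\lambda = T_{\uod,\lambda}+\tau_{-,\lambda}+F_\lambda^*\tau_{+,\lambda}G_\lambda$ via the same bookkeeping and then performs the algebra on~\eqref{eq:H lambda}. Your intermediate relation $(L_P+\tau_{+,\lambda}+\lambda)(1-F_\lambda)=(L_P+\lambda)(1-G_\lambda)+\tau_{+,\lambda}$ is equivalent to the paper's $\tau_{+,\lambda}G_\lambda=(L_P+\tau_{+,\lambda}+\lambda)(G_\lambda-F_\lambda)$, and the symmetry $F_\lambda^*\tau_{+,\lambda}G_\lambda=G_\lambda^*\tau_{+,\lambda}F_\lambda$ you invoke is correct (all three operators between $a(v)$ and $a^*(v)$ are multiplication operators, hence commute) but not strictly necessary, since the paper's form $F_\lambda^*\tau_{+,\lambda}G_\lambda$ would serve equally well.

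There is, however, a sign error precisely at the step you flag as the main obstacle. The sum over $j\le n$ in $G_\lambda^*\tau_{+,\lambda}F_\lambda\psi\uppar{n}$ is
\begin{equation*}
+\sum_{j=1}^n\int_{\R^3}\frac{\overline{v(\xi)}\,v(k_j)\,\tau_{+,\lambda}(K,\xi)\,\psi\uppar{n}(\hat K_j,\xi)}{(L_P(K,\xi)+\lambda)(L_P(K,\xi)+\tau_{+,\lambda}(K,\xi)+\lambda)}\,\ud\xi,
\end{equation*}
and since $T_{\uod,\lambda}$ and $S_{\uod,\lambda}$ both carry an overall minus sign, the elementary identity $\frac{1}{L_P+\lambda}-\frac{1}{L_P+\tau_{+,\lambda}+\lambda}=\frac{\tau_{+,\lambda}}{(L_P+\lambda)(L_P+\tau_{+,\lambda}+\lambda)}$ shows this equals $S_{\uod,\lambda}-T_{\uod,\lambda}$, not $T_{\uod,\lambda}-S_{\uod,\lambda}$ as you state. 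With your sign, $\tau_{+,\lambda}-G_\lambda^*\tau_{+,\lambda}F_\lambda$ would come out as $T_{\ud,\lambda}-S_{\ud,\lambda}-T_{\uod,\lambda}+S_{\uod,\lambda}$, which is not $T_\lambda-S_\lambda$. With the corrected sign the computation closes: $G_\lambda^*\tau_{+,\lambda}F_\lambda=(S_{\ud,\lambda}-\tau_{-,\lambda})+(S_{\uod,\lambda}-T_{\uod,\lambda})$, hence $\tau_{+,\lambda}-G_\lambda^*\tau_{+,\lambda}F_\lambda=T_{\ud,\lambda}-S_{\ud,\lambda}+T_{\uod,\lambda}-S_{\uod,\lambda}=T_\lambda-S_\lambda$ as required.
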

\begin{proof}
 We can rewrite the expression for $S_{\uod, \lambda}$ as 
 \begin{align}
   &\left(S_{\uod,\lambda}\psi\uppar{n}\right)(K) \notag \\
   &=  \left(T_{\uod,\lambda}\psi\uppar{n}\right)(K) + \sum_{j=1}^{n} \int_{\R^3}  \frac{\overline{v(\xi)} v(k_{j})\tau_{+,\lambda}(K, \xi) \psi \uppar n(\hat{K}_j,\xi )}{(L_P(K,\xi)+\lambda)(L_P(K,\xi) + \tau_{+,\lambda}(K, \xi)+\lambda)}.
 \end{align}
Putting the second term together with the second term of $S_{\ud, \lambda}$ yields the identity 
 \begin{align}
  S_\lambda = & T_{\uod,\lambda}+ \tau_{-,\lambda} + a(v)(L_P + \tau_{+,\lambda} + \lambda)^{-1}\tau_{+,\lambda}(L_P + \lambda)^{-1}a^*(v)   \notag \\
  = &   T_{\uod, \lambda} + \tau_{-,\lambda} + F_\lambda^*\tau_{+,\lambda} G_\lambda,
\end{align}
where the last expression is a bounded operator on $\hilb_P$ by the proof of Lemma~\ref{lem:F}.
Now using this, we obtain
\begin{align}
 H_P= &   (1-G_\lambda)^*(L_P+\lambda) (1-G_\lambda)+ T_\lambda - \lambda \notag \\
 = &   (1-G_\lambda)^*(L_P+\lambda) (1-G_\lambda) +\tau_{+,\lambda}  + S_\lambda - F_{\lambda}^*\tau_{+,\lambda} G_\lambda  -\lambda\notag\\
 = &  (1-G_\lambda)^*(L_P+\tau_{+,\lambda} + \lambda) (1-G_\lambda)  +  S_\lambda - \lambda \notag \\
 &+(1-G_\lambda^*)\tau_{+,\lambda}G_\lambda + G^*_\lambda \tau_{+,\lambda} - F_{\lambda}^*\tau_{+,\lambda} G_\lambda .
\end{align}
By~\eqref{eq:GF diff} we have $\tau_{+,\lambda}G_\lambda=(L_P+\tau_{+,\lambda}+\lambda) (G_\lambda-F_\lambda)$, so we can rewrite the last line as
\begin{align}
 &G_\lambda^*\tau_{+,\lambda} + (1-G^*_\lambda) \tau_{+,\lambda} G_\lambda- F_{\lambda}^*\tau_{+,\lambda} G_\lambda \notag\\
 & = G_\lambda^*\tau_{+,\lambda}(1-F_\lambda) + (1-F_\lambda^*) \tau_{+,\lambda} G_\lambda - G_\lambda^*\tau_{+,\lambda} (G_\lambda - F_\lambda) \notag \\
  &= (G_\lambda - F_\lambda)^*(L_P+\tau_{+,\lambda} + \lambda)(1-F_\lambda) + (1-F_\lambda^*)(L_P+\tau_{+,\lambda} + \lambda)(G_\lambda-F_\lambda)  \notag \\
  &\qquad -( G_\lambda - F_\lambda)^*(L_P+\tau_{+,\lambda} + \lambda)(G_\lambda-F_\lambda) \notag \\
  &= (1-F_\lambda^*)(L_P+\tau_{+,\lambda} + \lambda)(1-F_\lambda) - (1-G_\lambda)^*(L_P+\tau_{+,\lambda} + \lambda) (1-G_\lambda).
\end{align}
This proves the identity as claimed.
\end{proof}

\subsection{Proof of Theorem~\ref{thm:H_P pos}} \label{sect:pos}


We start by proving that the principal part of $H_P$, as given in Proposition~\ref{prop:H F}, improves positivity.

\begin{lem}\label{lem:R_0}
Assume that $g<0$ and let $\lambda_0$ be as in Lemma~\ref{lem:F}\ref{lem:F inv}. Then, for all $\lambda> \lambda_0$
the operator
\begin{align}
 R_0(\lambda):= &  \Big((1-F_\lambda)^*(L_P+ \tau_{+,\lambda} +\lambda)  (1-F_\lambda)\Big)^{-1} \notag \\
 = &(1-F_\lambda)^{-1} (L_P+ \tau_{+,\lambda}+\lambda)^{-1} (1-F_\lambda^*)^{-1} \notag
\end{align}
is positivity-improving with respect to $\mathcal{C}_+$.
\end{lem}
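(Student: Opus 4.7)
The plan is to decompose $R_0(\lambda) = (1-F_\lambda)^{-1}(L_P+\tau_{+,\lambda}+\lambda)^{-1}(1-F_\lambda^*)^{-1}$, verify that each factor preserves $\mathcal{C}_+$, and then upgrade preservation to strict improvement by a descent-then-ascent argument through the vacuum sector. The middle factor is multiplication by the strictly positive function $(L_P+\tau_{+,\lambda}+\lambda)^{-1}$ on each $\hilb_P\uppar{n}$, so it preserves $\mathcal{C}_+$. From $F_\lambda = -(L_P+\tau_{+,\lambda}+\lambda)^{-1}a^*(v)$ I read off that $F_\lambda$ acts on $\hilb_P\uppar{n}$ by an expression identical to \eqref{eq:G n}, with $L_P$ replaced by $L_P+\tau_{+,\lambda}$. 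Since $g<0$ gives $-v(k)>0$ almost everywhere, this kernel is strictly positive, so $F_\lambda$ and (dually) $F_\lambda^*$ preserve $\mathcal{C}_+$. For $\lambda>\lambda_0$ the Neumann series $(1-F_\lambda)^{-1}=\sum_{j\geq 0}F_\lambda^j$ converges by Lemma~\ref{lem:F}\ref{lem:F inv}, so $(1-F_\lambda)^{-1}$ and $(1-F_\lambda^*)^{-1}$ preserve $\mathcal{C}_+$, and hence so does $R_0(\lambda)$.

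For strict improvement, I would fix $\psi\in\mathcal{C}_+\setminus\{0\}$, pick $n_0\in\N_0$ with $\psi\uppar{n_0}\neq 0$, and set $\chi:=(1-F_\lambda^*)^{-1}\psi$. Since each application of $F_\lambda^*$ lowers the particle number by one, iterating its defining formula gives
\[
  \bigl((F_\lambda^*)^{n_0}\psi\bigr)\uppar{0} = \sqrt{n_0!}\int_{\R^{3n_0}} \frac{\prod_{j=1}^{n_0}(-v(\xi_j))\,\psi\uppar{n_0}(\xi_1,\dots,\xi_{n_0})}{\prod_{k=1}^{n_0}\bigl[L_P(\xi_1,\dots,\xi_k)+\tau_{+,\lambda}(\xi_1,\dots,\xi_k)+\lambda\bigr]}\,\ud\xi_1\cdots\ud\xi_{n_0}.
\]
The integrand is strictly positive wherever $\psi\uppar{n_0}$ is, so the integral is a strictly positive number. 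Bounding $\chi\uppar{0}$ below by this non-negative Neumann-series term then yields $\chi\uppar{0}>0$, and hence $\eta\uppar{0}>0$, where $\eta:=(L_P+\tau_{+,\lambda}+\lambda)^{-1}\chi$.

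To spread this strict vacuum positivity to every sector, I would expand $R_0(\lambda)\psi = \sum_{j\geq 0}F_\lambda^j\eta$ and retain only the contribution of the vacuum component of $\eta$, giving
\[
  (R_0(\lambda)\psi)\uppar{m} \geq \bigl(F_\lambda^m \eta\uppar{0}\bigr)\uppar{m},\qquad m\in\N_0.
\]
A short induction on $m$ using the explicit formula for $F_\lambda$ then shows the right-hand side is strictly positive almost everywhere: applying $F_\lambda$ to an a.e.-positive $(m-1)$-particle function yields a sum of terms each of the form $(-v(k_j))/(L_P+\tau_{+,\lambda}+\lambda)$ times an a.e.-positive function of $\hat K_j$, which by Fubini is a.e.-positive in $K$. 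Consequently $(R_0(\lambda)\psi)\uppar{m} > 0$ a.e.\ for every $m$, equivalent to $R_0(\lambda)\psi > 0$ with respect to $\mathcal{C}_+$. The main obstacle will be that mere positivity-preservation does not yield strict positivity in every sector; strict improvement requires that the vacuum component becomes strictly positive, which is exactly where the descent step via $F_\lambda^*$ and the pointwise sign of $v$ are essential.
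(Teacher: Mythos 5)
Your proof is correct and takes essentially the same route as the paper: the same factorisation of $R_0(\lambda)$, positivity of $F_\lambda$ and $F_\lambda^*$ read off from the explicit kernel with $-v>0$, the Neumann series for $(1-F_\lambda)^{-1}$ (valid for $\lambda>\lambda_0$), and strict positivity channelled through the vacuum sector. The only cosmetic difference is that the paper descends both the test vector $\phi$ and $\psi$ to the vacuum via powers of $F_\lambda^*$ and pairs them there, whereas you descend $\psi$ and then ascend with powers of $F_\lambda$, obtaining almost-everywhere strict positivity of $R_0(\lambda)\psi$ in every sector, which is an equivalent way to conclude.
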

\begin{proof}
 We need to show that for all non-zero $\phi, \psi\geq 0$ we have
 \begin{align}\label{eq:res pos1}
  \left \langle (1-F_\lambda^*)^{-1}\phi, (L_P+ \tau_{+,\lambda}+\lambda)^{-1}(1-F_\lambda^*)^{-1} \psi \right\rangle >0.
 \end{align}
Let $n\geq 0$ be such that $\phi\uppar{n+1}\neq0$. From the formula 
\begin{equation}
 F_\lambda^*\phi\uppar{n+1} (K)= \sqrt{n+1}\int_{\R^3} \frac{-v(\xi)}{L_P(K, \xi) + \tau_{+,\lambda}(K,\xi) + \lambda} \phi\uppar{n+1}(K,\xi)\ud \xi
\end{equation}
we see that $F_\lambda^*\phi\uppar{n+1} \neq0$, since the first factor of the integrand is strictly positive. By induction then $ \left(F_\lambda^*\right)^{n+1}\phi\uppar{n+1}\neq 0$. Since $(1-F_\lambda^*)^{-1} = \sum_{j=1}^\infty \left(F_\lambda^*\right)^{j}$ (by choice of $\lambda_0$), this implies that
\begin{equation}
 \left \langle (1-F_\lambda^*)^{-1}\phi,\varnothing \right \rangle >0,
\end{equation}
where $\varnothing \in \hilb_P$ denotes the vacuum vector.
This proves~\eqref{eq:res pos1}, since the same applies to $(1-F_\lambda^*)^{-1}\psi$ and the restriction of $(L_P+ \tau_{+,\lambda} +\lambda)^{-1}$ to the vacuum sector is a strictly positive number, so both arguments of the scalar product have non-zero overlap with the vacuum.
\end{proof}

To complete the proof of Theorem~\ref{thm:H_P pos}, we will use a perturbative argument for $S_\lambda$. Since $S_{\uod, \lambda}$ is an  integral operator with negative kernel, the key point is that now  $S_{\ud , \lambda}$ is essentially negative.

\begin{lem}\label{lem:S neg}
For all $\lambda_0>0$ there exists $\mu_0$ such that for $\mu>\mu_0$ and every $\lambda\geq \lambda_0$ the operator $-(S_\lambda - \mu)$ is positivity-preserving.
\end{lem}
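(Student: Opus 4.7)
The plan is to decompose
\begin{equation*}
-(S_\lambda-\mu)=(\mu-S_{\ud,\lambda})+(-S_{\uod,\lambda})
\end{equation*}
and show that each summand maps $\mathcal{C}_+$ into itself. This decomposition is precisely what the modified representation of Proposition~\ref{prop:H F} was designed for: the positive part $\tau_{+,\lambda}$ that would otherwise contaminate the multiplication operator in~\eqref{eq:H lambda} has been absorbed into the kinetic piece $(1-F_\lambda)^*(L_P+\tau_{+,\lambda}+\lambda)(1-F_\lambda)$, leaving a diagonal $S_{\ud,\lambda}$ that is uniformly bounded from above.

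The off-diagonal summand is essentially immediate. Reading off the integral kernel of $S_{\uod,\lambda}$ from Proposition~\ref{prop:H F}, one observes that $v(k)=g\omega(k)^{-1/2}$ is real with $g<0$, so $\overline{v(\xi)}\,v(k_j)=g^2/\sqrt{\omega(\xi)\omega(k_j)}>0$, while the denominator $L_P(K,\xi)+\tau_{+,\lambda}(K,\xi)+\lambda$ is strictly positive. Therefore $-S_{\uod,\lambda}$ is an integral operator with a non-negative kernel, and it maps $\mathcal{C}_+$ into $\mathcal{C}_+$.

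For the diagonal summand, note that $\mu-S_{\ud,\lambda}$ acts as multiplication by $\mu-S_{\ud,\lambda}(K)$, so it preserves $\mathcal{C}_+$ provided $S_{\ud,\lambda}(K)\le\mu$ almost everywhere. Since $\tau_{-,\lambda}(K)\le 0$ by construction, only the integral term in $S_{\ud,\lambda}$ needs to be bounded from above. Using~\eqref{eq:tau-bound} in the form $\tau_{+,\lambda}(K,\xi)\le C(L_P(K,\xi)+\lambda)^\eps$ together with the elementary inequality $L_P+\tau_{+,\lambda}+\lambda\ge L_P+\lambda$, the integrand is dominated by $C|v(\xi)|^2(L_P(K,\xi)+\lambda)^{-(2-\eps)}$, and the task reduces to proving
\begin{equation*}
\sup_{n,K}\int_{\R^3}\frac{|v(\xi)|^2}{(L_P(K,\xi)+\lambda)^{2-\eps}}\,\ud\xi \le C_{\lambda_0}\qquad (\lambda\ge\lambda_0).
\end{equation*}
Setting $\mu_0:=C_{\lambda_0}$ then closes the argument.

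The real work sits in this last uniform bound. With $q:=P-\sum_{j=1}^n k_j$ and $a:=\sum_{j}\omega(k_j)\ge 0$ one has $L_P(K,\xi)\ge (\xi-q)^2+\omega(\xi)$, so the integrand is controlled pointwise by $Cg^2/\bigl(\omega(\xi)\bigl((\xi-q)^2+\omega(\xi)+\lambda_0\bigr)^{2-\eps}\bigr)$. Choosing $\eps<1/2$, this function decays like $|\xi-q|^{-(5-2\eps)}$ at infinity and is locally integrable near its potential singularities (at $\xi=0$ when $m=0$, and near $\xi=q$) thanks to the three-dimensional Jacobian; a dyadic splitting in $|\xi-q|$ then yields a bound independent of $q$ and $a$, and non-increasing in $\lambda$. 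This estimate is in the same spirit as the scaling bounds behind Lemma~\ref{lem:T_d}, with an extra power $(1-\eps)$ of $L_P+\lambda$ in the denominator that only helps; once it is in hand, everything else in the proof is sign bookkeeping.
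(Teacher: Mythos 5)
Your proof is correct and follows essentially the same route as the paper: the identical decomposition into $-S_{\uod,\lambda}$ (negative kernel) plus the multiplication operator $\mu-S_{\ud,\lambda}$, the bound $\tau_{+,\lambda}\le C(L_P+\lambda)^\eps$ together with $L_P+\tau_{+,\lambda}+\lambda\ge L_P+\lambda$, and a uniform bound on the resulting scalar integral. The only difference is cosmetic: the paper uses $\omega(\xi)\ge|\xi|$ and the Hardy--Littlewood rearrangement inequality to reduce directly to the $q$-independent integral $\int|\xi|^{-1}(\xi^2+\lambda_0)^{-(2-\eps)}\,\ud\xi$ (which converges for any $\eps<1$, so $\eps<1/2$ is not needed), whereas your dyadic splitting in $|\xi-q|$ works too but should be supplemented by the observation that $\int_{\{|\xi-q|\sim R\}}\omega(\xi)^{-1}\ud\xi\le CR^2$ uniformly in $q$ (there is in fact no singularity at $\xi=q$, since the denominator is bounded below by $\lambda_0^{2-\eps}$).
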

\begin{proof}
 The operator $S_{\uod, \lambda}$ is an integral operator with negative kernel, hence $-S_{\uod, \lambda}$ is positivity-preserving. Further, on the $n$-particle sector, the operator $S_{\ud, \lambda}$ is a multiplication operator by the sum of $\tau_{-,\lambda}(K)$, which is non-positive, and a non-negative function.
 We will show that this function is bounded (uniformly in $\lambda\geq \lambda_0$ and $n$) and then choose 
 \begin{equation}
\mu_0 = \sup_{\lambda\geq \lambda_0,n\in \N_0 , K\in \R^{dn}} S_{\ud, \lambda}(K),
 \end{equation}
whence $-S_{\ud, \lambda}(K) + \mu\geq 0$ for almost every $K\in \R^{3n}$.
For this, first use~\eqref{eq:tau-bound} (with $\eps<1$) and then the Hardy-Littlewood rearrangement inequality, to obtain  for $\lambda \geq \lambda_0$
\begin{align}
 S_{\ud,\lambda}(K) - \tau_{-,\lambda} 
 = &  \int_{\R^3}  \frac{|v(\xi)|^2 \tau_{+,\lambda} (K, \xi)}{(L_P(K,\xi)+\lambda)(L_P(K,\xi) + \tau_{+,\lambda}(K, \xi)+\lambda)} \ud \xi \notag\\
 \leq & C \int_{\R^3}  \frac{1}{\omega(\xi)(L_P(K,\xi)+\lambda)^{2-\eps}} \ud \xi \notag\\
 %
 %
  \leq  & C\int_{\R^3}  \frac{1}{|\xi|(\xi^2 + \lambda_0)^{2-\eps}} \ud \xi,\label{eq:S-tau}
\end{align}
which yields the claim.
\end{proof}

\begin{proof}[Proof of Theorem~\ref{thm:H_P pos}]
 We start by deriving a formula for the resolvent of $H_P$ for sufficiently large $\lambda$. Let $\lambda_0$ be as in Lemma~\ref{lem:F}\ref{lem:F inv}, $\mu>\mu_0$ as in Lemma~\ref{lem:S neg}, $\lambda\geq\max\{\mu,\lambda_0\}$, and $R_0(\lambda)$ as in Lemma~\ref{lem:R_0}. Then, by the representation of Proposition~\ref{prop:H F}, we have
 \begin{align}
  (H_P+\lambda - \mu)R_0(\lambda)=1+(S_\lambda - \mu)R_0(\lambda).
 \end{align}
 We now prove that this is invertible by a Neumann series for sufficiently large $\lambda$.

By Lemma~\ref{lem:T_d} and Equation~\eqref{eq:S-tau} there exists a constant $C>0$ such that
\begin{align}
 &\|(S_{\ud, \lambda} -\mu) R_0(\lambda)\|_{\mathscr{B}(\hilb_P)}   \notag\\
 &\leq \|\tau_{-,\lambda}R_0(\lambda)\|_{\mathscr{B}(\hilb_P)}  + \|(S_{\ud, \lambda}-\tau_{-,\lambda}-\mu) R_0(\lambda)\|_{\mathscr{B}(\hilb_P)} \notag\\
 &\leq C \| (L_P+\lambda)^\eps R_0(\lambda) \|_{\mathscr{B}(\hilb_P)} + (\mu_0+\mu) \|R_0(\lambda)\|_{\mathscr{B}(\hilb_P)}.\label{eq:S_d rel bound}
\end{align}
Using that $(1-F_\lambda)^{-1}=1+F_\lambda(1-F_\lambda)^{-1}$, we obtain for $\eps<1/4$ by Lemma~\ref{lem:F}
\begin{align}
 \| (L_P+\lambda)^\eps R_0(\lambda) \|_{\mathscr{B}(\hilb_P)}
 \leq& \|(1-F_\lambda^*)^{-1}\|_{\mathscr{B}(\hilb_P)} \Big(\|(L_P+\lambda)^{-1+\eps}\|_{\mathscr{B}(\hilb_P)}\notag \\
 & + \| (L_P+\lambda)^{\eps}F_\lambda(1-F_\lambda)^{-1} (L_P+\lambda)^{-1}\|_{\mathscr{B}(\hilb_P)}\Big)\notag \\
  \leq& C \lambda^{\eps-1}.
\end{align}
In view of~\eqref{eq:S_d rel bound} we thus have
\begin{equation}
 \|(S_{\ud, \lambda} -\mu) R_0(\lambda)\|_{\mathscr{B}(\hilb_P)} \leq C \lambda^{\eps-1}.
\end{equation}
%
%
%

By Lemma~\ref{lem:S_od} we have for the off-diagonal part
\begin{equation}
 \|S_{\uod, \lambda}  R_0(\lambda)\|_{\mathscr{B}(\hilb_P)} \leq C \| \ud \Gamma(\omega)^{1/2} R_0(\lambda) \|_{\mathscr{B}(\hilb_P)}.
\end{equation}
Using Lemma~\ref{lem:F}\ref{lem:F inv} and the fact that $\|(L_P+\lambda)^{-1}\|_{\mathscr{B}(\hilb_P, D(\ud \Gamma(\omega)^{1/2}))}$ is bounded by a constant times $\lambda^{-1/2}$ we obtain
\begin{align}
 &\| \ud \Gamma(\omega)^{1/2} R_0(\lambda) \|_{\mathscr{B}(\hilb_P)} \notag\\
 &\leq  \|(1-F_\lambda)^{-1}\|_{\mathscr{B}(D(\ud \Gamma(\omega)^{1/2}))} \| (L_P+\lambda)^{-1}\|_{\mathscr{B}(\hilb_P, D(\ud \Gamma(\omega)^{1/2}))} \|(1-F_\lambda^*)^{-1}\|_{\mathscr{B}(\hilb_P)} \notag \\
 &\leq C \lambda^{-1/2}.
\end{align}

Altogether, we find that there exists $C>0$ such that
\begin{equation}
 \|(S_\lambda - \mu)R_0(\lambda)\| < C \lambda^{-1/2},
\end{equation}
and it follows that for large enough $\lambda$
\begin{align}
 (H_P+\lambda - \mu)^{-1}&=
 R_0(\lambda)\big(1+(S_\lambda - \mu)R_0(\lambda)\big)^{-1} \notag \\
 &= R_0(\lambda) \sum_{j=0}^\infty \Big( -(S_\lambda - \mu)R_0(\lambda) \Big)^j.
\end{align}
By Lemma~\ref{lem:S neg}, the sum defines a positivity-preserving operator, which is one-to-one since it is invertible. As $R_0(\lambda)$ improves positivity by Lemma~\ref{lem:R_0}, we have for every $\psi\in \mathcal{C}_+\setminus\{0\}$
\begin{equation}
 R_0(\lambda) \underbrace{\big(1+(S_\lambda - \mu)R_0(\lambda)\big)^{-1} \psi}_{\in \mathcal{C}_+ \setminus\{0\}} >0.
\end{equation}
This proves the claim for all $\lambda>\lambda_1$, for some $\lambda_1>0$. The property extends to all $\lambda>-\inf \sigma(H_P)$ since, for $\gamma>\lambda_1\geq \lambda>-\inf \sigma(H_P)$, $(\gamma-\lambda)(H_P+\gamma)^{-1}$ has norm less than one, and thus
\begin{align}
 (H_P+\lambda)^{-1} &= (H_P+\gamma)^{-1}\Big(1-(\gamma-\lambda)(H_P+\gamma)^{-1}\Big)^{-1}  \notag \\
 &= (H_P+\gamma)^{-1} \sum_{j=0}^\infty \Big((\gamma-\lambda)(H_P+\gamma)^{-1}\Big)^{n} 
\end{align}
improves positivity.
\end{proof}

\appendix

\section{Technical Lemmas}\label{app:ibc}

Here we reprove the key Lemmas of~\cite{LaSch19, IBCmassless} for the special case of the (massive or massless) Nelson model at fixed momentum.

\begin{lem}\label{lem:G}
The family of operators $G_\lambda$ has the following properties: 
\begin{enumerate}[label=\alph*)]
 \item For every $\lambda> 0$, the operator $G_\lambda$ is bounded;\label{lem:G-a}
 \item\label{lem:G L-bound} $\ran G_\lambda \subset D(L_P^s)$ for any $0\leq s<1/4$, and for all $\lambda_0>0$
\begin{equation*}
 \sup_{\lambda\geq  \lambda_0} \|(L_P+\lambda)^s G_\lambda \|_{\mathscr{B}(\hilb_P)} <\infty;
\end{equation*}
\item\label{lem:G O-bound} $G_\lambda$ maps $D(\ud \Gamma(\omega)^{1/2})$ to itself and for all $\lambda_0>0$ there exists $C>0$ so that for all $\lambda\geq\lambda_0$ and $\psi\in D(\ud \Gamma(\omega)^{1/2})$
\begin{equation*}
 \|\ud \Gamma(\omega)^{1/2} G_\lambda \psi\|_{\hilb_P} \leq C \lambda^{-1/4} \|\ud \Gamma(\omega)^{1/2} \psi\|_{\hilb_P};
\end{equation*}

\item There exists $\lambda_0$ so that for all $\lambda>\lambda_0$ the operator $1-G_\lambda $ is boundedly invertible on $\hilb_P$ and $D(\ud \Gamma(\omega)^{1/2})$, and
\begin{equation*}
 \sup_{\lambda>\lambda_0} \Big(\|(1-G_\lambda)^{-1}\|_{\mathscr{B}(\hilb_P)} + \|(1-G_\lambda)^{-1}\|_{\mathscr{B}(D(\ud \Gamma(\omega)^{1/2})} \Big)<\infty.
 \end{equation*}

\label{lem:G inv}
\end{enumerate}
\end{lem}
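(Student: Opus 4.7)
The plan is to prove all four statements by direct estimates on the explicit kernel of $G_\lambda$ given in~\eqref{eq:G n}. For parts~\ref{lem:G-a}--\ref{lem:G O-bound}, the basic scheme I would follow is: expand $\|(L_P+\lambda)^s G_\lambda\psi\uppar{n}\|^2_{\hilb_P}$ (and analogously with $\ud\Gamma(\omega)^{1/2}$ in place of $(L_P+\lambda)^s$), apply Cauchy--Schwarz to the sum of $(n+1)$ terms defining $G_\lambda \psi\uppar{n}(K)$, and use the permutation symmetry of $\psi\uppar{n}$ to collapse the resulting double integral to the form $(n+1)\int |\psi\uppar n(\hat K)|^2 \Phi_s(\hat K,\lambda)\, d\hat K$, where
\[
\Phi_s(\hat K, \lambda) := \int_{\R^3}\frac{|v(k)|^2\, dk}{(L_P(\hat K, k)+\lambda)^{2-2s}}.
\]
The core task is then to bound $\Phi_s$ uniformly in $\hat K$ with sufficient decay in $\mu := \lambda + \sum_{i=1}^n \omega(k_i)$ to absorb the combinatorial factor $(n+1)$.

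Using the inequality $L_P(\hat K, k) \geq (Q-k)^2 + \omega(k) + \sum_{i=1}^n \omega(k_i)$ with $Q := P - \sum_i k_i$, the Hardy--Littlewood rearrangement inequality removes the $Q$-dependence (radial majorization of $(Q-k)^2$ by $k^2$); a weighted AM--GM splitting $a+b+c \geq a^\alpha b^\beta c^\gamma$ with $\alpha+\beta+\gamma=1$ distributes the exponent $2-2s$ between the kinetic, dispersive, and field-energy contributions, and the substitution $k = \sqrt\mu\, q$ in the remaining integral yields $\Phi_s(\hat K, \lambda) \leq C\mu^{-\delta(s)}$ for some $\delta(s) > 0$. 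The threshold $s < 1/4$ in~\ref{lem:G L-bound} would come from imposing simultaneously (i) that the $k$-integral converges at $|k| \to \infty$ against $|v(k)|^2 \sim 1/|k|$, and (ii) that the AM--GM weight on the field-energy piece is large enough for $(n+1)\mu^{-\gamma(2-2s)}$ to be bounded in $n$.

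For~\ref{lem:G O-bound}, I would run the same scheme with $\sqrt{\sum_j \omega(k_j)}$ in front of the kernel, using the subadditivity $\sqrt{\sum_{j=1}^{n+1}\omega(k_j)} \leq \sqrt{\omega(k_\ell)} + \sqrt{\sum_{j\neq\ell}\omega(k_j)}$ in each term $\ell$: the first piece exploits the algebraic identity $\sqrt{\omega(k_\ell)}|v(k_\ell)| = |g|$ and reduces to the framework of~\ref{lem:G L-bound} with one extra power of $\mu^{-1/2}$ gained, while the second, after symmetrization, pairs with the field energy of the remaining variables to reconstitute $\|\ud\Gamma(\omega)^{1/2}\psi\|^2_{\hilb_P}$; both contributions pick up $\lambda^{-1/2}$ from the scaling, giving $\lambda^{-1/4}$ after the square root. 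Statement~\ref{lem:G inv} then follows by a Neumann-series argument: combining~\ref{lem:G L-bound} (taken with $s\in (0,1/4)$) and the elementary bound $\|(L_P+\lambda)^{-s}\|_{\mathscr{B}(\hilb_P)} \leq \lambda^{-s}$ yields $\|G_\lambda\|_{\mathscr{B}(\hilb_P)} \leq C\lambda^{-s}$, which is strictly less than one for $\lambda$ large; the corresponding bound in the graph norm of $D(\ud\Gamma(\omega)^{1/2})$ follows from~\ref{lem:G O-bound}, and both being uniform in $\lambda \geq \lambda_0$ gives the uniform bound on $(1-G_\lambda)^{-1}$.

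The main obstacle I anticipate is the uniform-in-$n$ control of $\Phi_s$ in the massless case ($m=0$): since $\sum \omega(k_i)$ then carries no inherent lower bound in $n$, choosing the AM--GM weights so that a strong enough power of $\mu$ is extracted to beat $(n+1)$ without destroying the integrability of the $k$-integral is delicate, and may require, in place of the crude Cauchy--Schwarz step, a sharper treatment of the off-diagonal contributions in $\bigl|\sum_j v(k_j)\psi\uppar n(\hat K_j)\bigr|^2$ via a Schur test or a pull-through identity for $\ud\Gamma(\omega)$.
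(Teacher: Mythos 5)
There is a genuine gap, and it sits exactly where you flag it: the crude Cauchy--Schwarz step cannot be repaired by a cleverer choice of AM--GM weights, because the decay of $\Phi_s$ in $\mu=\lambda+\sum_i\omega(k_i)$ is structurally too weak to beat the combinatorial factor. After rearrangement and scaling one gets at best $\Phi_s(\hat K,\lambda)\leq C(\Omega(\hat K)+\lambda)^{2s-1}$ with $\Omega(\hat K)=\sum_i\omega(k_i)$, so your scheme yields $(n+1)(\Omega(\hat K)+\lambda)^{2s-1}$; for $m=0$ the quantity $\Omega$ carries no growth in $n$ at all, and even for $m>0$ one only gets $(n+1)(nm+\lambda)^{2s-1}\sim n^{2s}$, which is unbounded in $n$ for every $s>0$. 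So parts b) and c) (and a) in the massless case) do not follow from this scheme, and the ``sharper treatment of the off-diagonal contributions'' that you defer to a Schur test or pull-through identity is in fact the whole content of the proof, not an optional refinement. Note also that in your scheme the $k$-integral converges for all $s<1/2$, so the threshold $1/4$ would not even appear where you locate it.

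The paper's resolution is elementary but different in mechanism: the factor $n$ is not beaten by decay, it is absorbed into $\ud\Gamma(\omega)\leq L_P$. Working (equivalently, by duality) with $a(v)(L_P+\lambda)^{s-1}$, one expands the square as a double integral in $\xi,\eta$, inserts the weight $\sqrt{\omega(\xi)/\omega(\eta)}$ and its inverse, and uses $ab\leq\tfrac12(a^2+b^2)$ together with the $\xi\leftrightarrow\eta$ symmetry of the expression. This produces one factor $\int|v(\eta)|^2(L_P(Q,\eta)+\lambda)^{-(1-2s)}\omega(\eta)^{-1}\,\ud\eta$, uniformly bounded by rearrangement precisely when $s<1/4$ (the extra $\omega(\eta)^{-1}$ gained from the weight insertion is the true origin of the threshold), and one factor $n\,\omega(\xi)|\psi\uppar{n}(Q,\xi)|^2/(L_P(Q,\xi)+\lambda)$, which after using the permutation symmetry of $\psi\uppar{n}$ becomes $\sum_j\omega(k_j)\,|\psi\uppar{n}(K)|^2/(L_P(K)+\lambda)\leq|\psi\uppar{n}(K)|^2$. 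Uniformity in $n$ and in $m\geq0$ is then automatic, with no case distinction for the massless model. Your outline for c) (splitting the field energy, using $\omega^{1/2}|v|=|g|$, gaining $\lambda^{-1/2}$ before the square root) and for d) (Neumann series in $\hilb_P$ and $D(\ud\Gamma(\omega)^{1/2})$ from the $\lambda^{-s}$ and $\lambda^{-1/4}$ bounds) is in the right spirit and matches the paper, but both rest on b) and c), whose proof is exactly the missing piece.
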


\begin{proof}
 For \ref{lem:G-a} and \ref{lem:G L-bound} it is sufficient to prove that 
 \begin{equation}
 -a(v)(L_P+\lambda)^{s-1}
 \end{equation}
defines a bounded operator on $\hilb_P$, uniformly in $\lambda$.
To prove this, we insert a factor of $\sqrt{\omega(\xi)/\omega(\eta)}$ and its inverse, and then use that $a b\leq (a^2 + b^2)/2$ as well as the symmetry in $\xi, \eta$, to obtain for $n\geq 1$
\begin{align}
 &\|a(v)(L_P+\lambda)^{s-1}\psi\uppar{n}\|^2_{\hilb_P\uppar{n-1}} \notag\\
 &= n \int\limits_{\R^{3(n-1)}} \ud Q \int\limits_{\R^3} \ud  \xi \int\limits_{\R^3} \ud \eta \frac{v(\eta)\psi\uppar{n}(Q,\xi) \omega(\xi)^{1/2}}{(L_P(Q,\eta)+\lambda)^{1-s}\omega(\eta)^{1/2}} \frac{v(\xi) \overline{\psi}\uppar{n}(Q,\eta)\omega(\eta)^{1/2}}{(L_P(Q,\xi)+\lambda)^{1-s}\omega(\xi)^{1/2}} \notag\\
 &\leq n \int\limits_{\R^{3(n-1)}} \ud Q \int\limits_{\R^3} \ud  \xi  \frac{\omega(\xi) |\psi\uppar{n}(Q,\xi)|^2}{L_P(Q,\xi)+\lambda} \int\limits_{\R^3}   \ud \eta \frac{|v(\eta)|^2 }{(L_P(Q,\eta)+\lambda)^{1-2s}\omega(\eta)}.
 \label{eq:G expand}
\end{align}
By the Hardy-Littlewood rearrangement inequality we have for $s<1/4$
\begin{align}
 \int_{\R^3}    \frac{|v(\eta)|^2 \ud \eta}{(L_P(Q,\eta)+\lambda)^{1-2s} \omega(\eta)}
 &\leq g^2 \int_{\R^3}    \frac{\ud \eta }{(\eta^2 + m^2)((P-\eta-\sum_{j=1}^{n-1}k_j)^2+\lambda)^{1-2s}} \notag \\
 & \leq  g^2 \int_{\R^3}  \frac{\ud \tau }{|\tau|^2 (\tau^2+ \lambda)^{1-2s}},
\end{align}
which is uniformly bounded for $\lambda\geq \lambda_0$.
Together with~\eqref{eq:G expand} and the symmetry of $\psi\uppar{n}$ this gives
\begin{align}
 \|a(v)(L_P+\lambda)^{s-1}\psi\uppar{n}\|^2_{\hilb_P\uppar{n-1}}  &
 \leq  n  C\int\limits_{\R^{3(n-1)}} \ud Q \int\limits_{\R^3} \ud  \xi  \frac{\omega(\xi) |\psi\uppar{n}(Q,\xi)|^2}{L_P(Q,\xi)+\lambda}
 \notag \\
 & = C \int\limits_{\R^{3n}} \ud K \frac{ \sum_{j=1}^n \omega(k_j) |\psi\uppar{n}(K)|^2}{L_P(K)+\lambda} \notag\\
 &\leq C \| \psi\uppar{n} \|_{\hilb_P\uppar{n}}^2.\label{eq:G sym}
\end{align}

To prove~\ref{lem:G O-bound}, we proceed as in~\eqref{eq:G expand} to obtain (denoting $\Omega(K)=\sum_{j=1}^n \omega(k_j)$)
\begin{align}
 &\|(1+\ud\Gamma(\omega))^{-1/2} a(v)(L_P+\lambda)^{-1}\psi\uppar{n}\|^2_{\hilb_P\uppar{n-1}} \notag\\
 &\leq 2n \int\limits_{\R^{3(n-1)}} \ud Q \int\limits_{\R^3} \ud  \xi \frac{ \omega(\xi) |\psi\uppar{n}(Q,\xi)|^2}{(1+\Omega(Q))\Omega(Q,\xi)^2} \int\limits_{\R^3}   \ud \eta \frac{\Omega(Q, \eta)^2 |v(\eta)|^2 }{(L_P(Q,\eta)+\lambda)^2\omega(\eta)}.
\end{align}
Spelling out $\Omega(Q, \eta)^2 = \Omega(Q)^2 + 2\omega(\eta)\Omega(Q)+ \omega(\eta)^2$, we have three terms to deal with.
The term with $\omega(\eta)^2$ leads to, 
 \begin{equation}
 \frac{1}{1+\Omega(Q)} \int_{\R^3}    \frac{\omega(\eta) |v(\eta)|^2 \ud \eta}{(L_P(Q,\eta)+\lambda)^2} \leq  \frac{g^2}{(\Omega(Q)+\lambda)^{1/2}}\int_{\R^3} \frac{\ud \tau}{(\tau^2 +1)^2},
 \end{equation}
by scaling and rearrangement.
Similarly, we find for the other terms
\begin{equation}
 \frac{2\Omega(Q)}{1+\Omega(Q)}\int_{\R^3}    \frac{|v(\eta)|^2 \ud \eta}{(L_P(Q,\eta)+\lambda)^2} \leq  \frac{2 g^2}{\Omega(Q)+\lambda}\int_{\R^3} \frac{\ud \tau}{|\tau| (\tau^2 +1)^2},
\end{equation}
and
\begin{equation}
  \frac{\Omega(Q)^2}{1+\Omega(Q)} \int_{\R^3}    \frac{|v(\eta)|^2 \ud \eta}{(L_P(Q,\eta)+\lambda)^2\omega(\eta)} \leq \frac{g^2}{(\Omega(Q)+\lambda)^{1/2}}\int_{\R^3} \frac{\ud \tau}{|\tau|^2 (\tau^2 +1)^2}.
\end{equation}
Using symmetry as in~\eqref{eq:G sym} this implies for $\lambda \geq \lambda_0$
\begin{align}
 \|(1+\ud\Gamma(\omega))^{-1/2}a(v)(L_P+\lambda)^{-1}\psi\uppar{n}\|^2_{\hilb_P\uppar{n-1}} \leq C \lambda^{-1/2} \| \ud\Gamma(\omega)^{-1/2}\psi\uppar{n}\|^2_{\hilb_P\uppar{n}},
\end{align}
and proves~\ref{lem:G O-bound} by duality.

To prove~\ref{lem:G inv} observe that~\ref{lem:G L-bound} and~\ref{lem:G O-bound} imply that 
\begin{equation}
 \|G_\lambda\|_{\mathscr{B}(\hilb_P)} + \|G_\lambda\|_{\mathscr{B}(D(\ud \Gamma(\omega)^{1/2}))} \leq C \lambda^{-s}
\end{equation}
for $s<1/4$. Thus for large enough $\lambda$ the inverse of $1-G_\lambda$ in both spaces exists and is given by the Neumann series, whose norm is bounded by $(1-C\lambda_0^{-s})^{-1}$.
\end{proof}

\begin{lem}\label{lem:T_d}
For any $\eps>0$ there exists $C>0$ such that for all $\lambda>0$, $n\in \N_0$ and $K\in \R^{3n}$
 \begin{equation*}
  |T_{\ud,\lambda}(K)| \leq C (L_P(K) + \lambda)^{\eps}.
 \end{equation*}
\end{lem}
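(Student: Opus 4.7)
The approach is to rewrite $T_{\ud, \lambda}(K)$ as a single integral using the identity for the difference of two reciprocals together with $L_P(K, \xi) - \xi^2 - \omega(\xi) = L_P(K) - 2q\cdot\xi$, where $q := P - \sum_{j=1}^n k_j$:
\begin{equation*}
T_{\ud, \lambda}(K) = g^2\int_{\R^3}\frac{L_P(K) + \lambda - 2q\cdot\xi}{\omega(\xi)(\xi^2+\omega(\xi))(L_P(K,\xi)+\lambda)} \,\ud\xi.
\end{equation*}
Setting $R^2 := L_P(K) + \lambda$ and using $|q|^2 \leq L_P(K) \leq R^2$, the numerator is bounded by $R^2 + 2R|\xi|$. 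The plan is to show $|T_{\ud, \lambda}(K)| \leq C(1 + \log R)$ for $R \geq 1$ together with uniform boundedness of $|T_{\ud,\lambda}(K)|$ for $R$ in any bounded range; the claim then follows since $\log R \leq C_\eps R^{2\eps}$ for any $\eps > 0$ and $R\geq 1$.

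The key is to split the $\xi$-integration into three regions and derive region-dependent lower bounds on $L_P(K, \xi) + \lambda$. For the small region $|\xi| \leq R/2$, the Peter--Paul inequality $(q-\xi)^2 + \xi^2 \geq q^2/2$ combined with $q^2/2 + \sum_j\omega(k_j) + \lambda \geq R^2/2$ yields $L_P(K, \xi) + \lambda \geq R^2/2 - \xi^2 \geq R^2/4$. For the large region $|\xi| > 2R$, the bound $|q| \leq R \leq |\xi|/2$ gives $|q-\xi| \geq |\xi|/2$ and hence $L_P(K, \xi) + \lambda \geq \xi^2/4$. Plugging these in, the small region contributes at most $C\int_{|\xi| \leq R/2}\frac{\ud\xi}{\omega(\xi)(\xi^2+\omega(\xi))}$ plus a harmless contribution from the $2R|\xi|$-part of the numerator; the former is of order $\log R$ since the integrand decays like $|\xi|^{-3}$ at infinity. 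The large region is a direct computation giving $O(1)$, since the integrand is pointwise $O(R^2/|\xi|^5 + R/|\xi|^4)$.

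The main obstacle is the intermediate shell $R/2 < |\xi| \leq 2R$, where only the weaker bound $L_P(K, \xi) + \lambda \geq \omega(\xi) + (q-\xi)^2$ is available, and a ``resonance'' near $\xi \approx q$ can make the denominator as small as $\omega(q) \sim R$. Changing variables to $u := \xi - q$, this shell is contained in $\{|u| \lesssim R\}$; combined with $\omega(\xi) \sim R$ and $\xi^2 + \omega(\xi) \sim R^2$ throughout the shell, the integrand is bounded pointwise by $C/(R(R + u^2))$. The integral $\int_{|u| \lesssim R} \ud u/(R + u^2)$ is itself of order $R$, yielding an $O(1)$ contribution after multiplication by $1/R$: the small denominator in the resonance ball $|u| \lesssim \sqrt R$ is exactly compensated by its small volume $O(R^{3/2})$. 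Summing the contributions from the three regions gives the bound $|T_{\ud, \lambda}(K)| \leq C(1 + \log R)$, and the lemma follows.
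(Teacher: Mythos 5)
Your proposal is correct, and it takes a genuinely different route from the paper. The paper splits the numerator $q^2-2q\cdot\xi+\Omega(K)+\lambda$ (with $\Omega(K)=\sum_j\omega(k_j)$) into the pieces $q^2$, $2|q||\xi|$ and $\Omega(K)+\lambda$, absorbs part of the small-$\xi$ singularity through $(\xi^2+\omega(\xi))\ge|\xi|^{2-2\eps}$, and then scales out $\sqrt{\Omega(K)+\lambda}$ (resp.\ $|q|$) and uses the Hardy--Littlewood rearrangement inequality to reduce each piece to an explicit convergent model integral, the power $(L_P+\lambda)^{\eps}$ appearing directly from the scaling. You instead keep the numerator intact and decompose the $\xi$-integration into the regions $|\xi|\le R/2$, $R/2<|\xi|\le 2R$, $|\xi|>2R$ with $R^2=L_P(K)+\lambda$, proving region-dependent lower bounds on the denominator: the Peter--Paul bound $L_P(K,\xi)+\lambda\ge R^2/4$ on the small region, $|q-\xi|\ge|\xi|/2$ on the large region, and on the shell the pointwise bound $C/(R(R+u^2))$, $u=\xi-q$, where the resonance $\xi\approx q$ is compensated by the volume/denominator balance $\int_{|u|\lesssim R}(R+u^2)^{-1}\,\ud u=O(R)$. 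I checked these estimates; they hold uniformly in $n$, $m\ge0$, $P$ and $\lambda$, as required. What each approach buys: yours gives the sharper bound $C(1+\log R)$ for $R\ge 1$ (stronger than any power $R^{2\eps}$) by purely elementary pointwise estimates, with no rearrangement inequality; the paper's argument is shorter per term and produces the $\eps$-power directly.

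One small point, which you share with the paper rather than falling behind it: for $R\le 1$ your argument (like the paper's bound of the $2|q||\xi|$ piece by a plain constant) only yields a uniform constant, and a constant is not dominated by $C(L_P(K)+\lambda)^{\eps}$ when $L_P(K)+\lambda\to 0$, which can occur for $m=0$. So, as written, both proofs really establish $|T_{\ud,\lambda}(K)|\le C\bigl(1+(L_P(K)+\lambda)^{\eps}\bigr)$. This is harmless for every use of the lemma in the paper, where $\lambda\ge\lambda_0>0$ and the constant can be absorbed into $C(L_P(K)+\lambda)^{\eps}$; if you want the literal statement for all $\lambda>0$ you would need an extra argument showing that $T_{\ud,\lambda}(K)$ itself becomes small in that regime.
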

\begin{proof}
We treat only the case $n> 0$, the case $n=0$ is similar  but simpler. We have (with $\Omega(K)=\sum_{j=1}^n \omega(k_j)$)
 \begin{align}
  |T_{\ud,\lambda}(K)|
  = & \left|\int_{\R^3} |v(\xi)|^2 \frac{(P-\sum_{j=1}^nk_j - \xi)^2 - \xi^2 + \Omega(K) + \lambda}{(\xi^2 +\omega(\xi))(L_P(K, \xi)+ \lambda)}  \ud \xi\right| \notag \label{eq:T_d1} \\
  \leq & \int_{\R^3} |v(\xi)|^2 \frac{(P-\sum_{j=1}^nk_j)^2 + 2 |\xi||P-\sum_{j=1}^nk_j|}{(\xi^2 +\omega(\xi))(L_P(K, \xi)+ \lambda)}\ud \xi \\
  &+ \int_{\R^3} |v(\xi)|^2 \frac{\Omega(K) + \lambda}{(\xi^2 +\omega(\xi))(L_P(K, \xi)+ \lambda)}   \ud \xi . \label{eq:T_d2}
 \end{align}
 To simplify the notation, we set $p:=P-\sum_{j=1}^nk_j$.
 The first term in~\eqref{eq:T_d1} and the term~\eqref{eq:T_d2} are bounded by almost identical arguments, so we only give the details for one of them.

 Using that $(\xi^2 + \omega(\xi))\geq \xi^{2-4\eps}\omega(\xi)^{2\eps}\geq |\xi|^{2-2\eps}$ (for $1\geq\eps>0$) in~\eqref{eq:T_d2}, to avoid a singularity in $\xi=0$, and then scaling out $\sqrt{\Omega(K)+\lambda}$, we obtain by rearrangement
\begin{align}
 \eqref{eq:T_d2} \leq &  (\Omega(K) + \lambda)^{\eps} g^2\int\limits_{\R^3} \frac{\ud \xi}{\xi^{3-2\eps}\Big(\Big(\frac{p}{\sqrt{\Omega(K)+\lambda}}-\xi\Big)^2 +1\Big)} \notag \\
 \leq &  (\Omega(K) + \lambda)^{\eps} g^2\int_{\R^3} \frac{\ud \xi}{\xi^{3-2\eps}(\xi^2 +1)}. 
\end{align}

 For the second term in~\eqref{eq:T_d1} we have
\begin{align}
& \int_{\R^3} |v(\xi)|^2 \frac{2 |\xi||p|}{(\xi^2 +\omega(\xi))((p-\xi)^2 +\Omega(k) +\omega(\xi)+ \lambda)} \ud \xi\notag \\
&\leq g^2 \int_{\R^3} \frac{2  |p|}{\xi^2((p-\xi)^2 +\lambda)} \ud \xi.
\end{align}
Scaling by $|p|\neq 0$  then yields
\begin{equation}
 \int_{\R^3} \frac{2  |p|}{\xi^2((p-\xi)^2 + \lambda)} \ud \xi
 \leq \int_{\R^3}\frac{2}{\xi^2(\frac{p}{|p|}-\xi)^2}\ud \xi=C,
\end{equation}
where $C$ is independent of $p$ since the last integral is invariant by rotations.
 Combining these bounds proves the claim.
\end{proof}

\begin{lem}[{cf.~\cite[Lem.3.8]{IBCrelat}}]\label{lem:T_od}
There is $C>0$ so that  the inequality
\begin{equation*}
 \| T_{\uod, \lambda} \psi\uppar{n} \|_{\hilb_P\uppar{n}} \leq C \| \ud \Gamma(\omega)^{1/2} \psi\uppar{n} \|_{\hilb_P\uppar{n}}
\end{equation*}
holds for all $\lambda>0$ and $n\in \N$.
\end{lem}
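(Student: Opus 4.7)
I would approach this via a direct Cauchy--Schwarz estimate with carefully chosen weights, exploiting the symmetry of $\psi^{(n)}$ and of $L_P(K,\xi)$ under permutations of its $n+1$ momentum arguments.

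Concretely, start from
\begin{equation*}
\|T_{\uod,\lambda}\psi^{(n)}\|^2 = \int_{\R^{3n}} \Bigl|\sum_{j=1}^n v(k_j)\int_{\R^3} \frac{v(\xi)\psi^{(n)}(\hat{K}_j,\xi)}{L_P(K,\xi)+\lambda}\,\ud\xi\Bigr|^2 \ud K
\end{equation*}
and apply Cauchy--Schwarz jointly in $(j,\xi)$, factoring the kernel as
$v(k_j)v(\xi)/(L_P+\lambda) = A_j(K,\xi) B_j(K,\xi)$
with weights of the form $\omega(\xi)^{\pm\alpha}\omega(k_j)^{\mp\alpha}(L_P+\lambda)^{1/2\pm\delta}$ distributed between $A_j$ and $B_j$. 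This gives the pointwise bound
\begin{equation*}
|(T_{\uod,\lambda}\psi^{(n)})(K)|^2 \leq \Bigl(\sum_j \int A_j^2\,\ud\xi\Bigr)\Bigl(\sum_j \int B_j^2 |\psi^{(n)}(\hat{K}_j,\xi)|^2\,\ud\xi\Bigr).
\end{equation*}

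The first factor $\sum_j \int A_j^2\,\ud\xi$ is to be shown uniformly bounded in $K$ and $\lambda>0$ by using the lower bound $L_P(K,\xi)+\lambda\geq (P-\sum_i k_i-\xi)^2+\omega(\xi)+\lambda$ together with the Hardy--Littlewood rearrangement inequality (exactly as in the proof of Lemma~\ref{lem:T_d}), which reduces matters to explicit radial integrals of the form $\int_0^\infty r^a\,\ud r/(r^2+r+\lambda)^b$.

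For the second factor, after integrating $\int\ud K$ one performs the change of variables $k_j\leftrightarrow \xi$ in each $j$-summand. The measure $\ud K\,\ud\xi$ is preserved, $\psi^{(n)}(\hat K_j,\xi)$ becomes $\psi^{(n)}(K')$ by symmetry of $\psi^{(n)}$, and $L_P(K,\xi)=L_P(K',\xi')$ by total symmetry of $L_P$. With the exponents chosen so that the $\omega(\xi)$-power in $B_j$ swaps into $\omega(k_j')$, the sum $\sum_j\omega(k_j')=\Omega(K')$ produces exactly the factor $\Omega$ needed to match $\|\ud\Gamma(\omega)^{1/2}\psi^{(n)}\|^2$, while the remaining inner $\xi'$-integral is controlled uniformly by rearrangement.

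The main obstacle is to verify that a \emph{single} choice of exponents $(\alpha,\delta)$ makes all of these integrals finite and uniformly bounded in $\lambda>0$, simultaneously. This is most delicate in the massless case $m=0$, where $\omega(\xi)=|\xi|$ vanishes at the origin and forces both small-$\xi$ and large-$\xi$ integrability constraints; the admissible range is narrow but non-empty, in accordance with the analogous estimate in \cite[Lem.~3.8]{IBCrelat}.
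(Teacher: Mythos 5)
Your high-level strategy matches the paper's: symmetrise the weights $\omega(\xi)^{\pm\alpha}\omega(k_j)^{\mp\alpha}$, apply Cauchy--Schwarz, then use the change of variables $k_j\leftrightarrow\xi$ together with the symmetry of $\psi^{(n)}$ and of $L_P$, and control the resulting radial integrals by Hardy--Littlewood rearrangement and scaling. However, the concrete plan you state for closing the argument --- proving the $\psi$-free factor $\sum_j\int A_j^2\,\ud\xi$ to be \emph{uniformly} bounded in $K$ and $\lambda$ --- cannot actually be carried out in the proposed class of power-law weights, and this is precisely the spot where the delicate issue lies.

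Here is why. Writing $A_j=\omega(\xi)^{a_1}\omega(k_j)^{a_2}(L_P+\lambda)^{a_3}$ and $B_j=\omega(\xi)^{b_1}\omega(k_j)^{b_2}(L_P+\lambda)^{b_3}$ with $a_i+b_i$ fixed by the kernel $v(k_j)v(\xi)/(L_P+\lambda)$, the $\psi$-free factor is, after rearrangement and scaling,
\begin{equation*}
\sum_{j=1}^n\int A_j^2\,\ud\xi \ \lesssim\ \Big(\sum_{j}\omega(k_j)^{2a_2}\Big)\big(\Omega(K)+\lambda\big)^{(3+2a_1+4a_3)/2}.
\end{equation*}
To make the $j$-sum controllable for \emph{all} $n$ one is forced to $2a_2\geq1$ (any smaller exponent produces an explicit $n$-dependence or a singularity at $k_j=0$ when $m=0$); then the uniform bound pins down the exponents so that the total power of $\Omega$ vanishes. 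After working out the dual constraints from the $\psi$-carrying side (where you need a bound of the form $C\Omega(K)$ after the $k_j\leftrightarrow\xi$ swap), one finds that the only admissible values are $a_2=1/2$, $a_1=-1$, $a_3=-3/4$, hence $b_2=-1$, $b_3=-1/4$. But then the $\eta$-integral on the $\psi$-side, $\int\omega(\eta)^{-2}(L_P(K,\eta)+\lambda)^{-1/2}\,\ud\eta$, diverges logarithmically at infinity; the convergence condition $2b_2+4b_3<-3$ is hit exactly at the boundary. So the ``narrow but non-empty'' range you invoke is in fact empty, and the scheme breaks at the endpoint.

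The paper avoids this by \emph{not} demanding that either Cauchy--Schwarz factor be uniform in $K$. It chooses $\alpha=1/2$, $\delta=0$, for which the $\psi$-free factor is bounded only by $C\,\Omega(K)^{1/2}$, and --- crucially --- it keeps the intermediate quantity $\sum_\ell\omega(k_\ell)\,\Omega(K)^{-1/2}$ (with $\Omega(K)$ depending on the integration variable) inside the $\ud K$-integral rather than replacing it by a constant. After the variable swap this becomes $\Omega(\hat K_j,\eta)^{-1/2}$, and it is the interplay between this factor, the $\omega(k_\ell)$-sum, and the remaining $\omega(k_j)$ and rearrangement estimate that produces the needed $\Omega(K)$ in the end (see the split into $\ell\neq j$ and $\ell=j$ contributions, \eqref{eq:Tod bound1} and \eqref{eq:Tod bound2}). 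In short: the missing idea is that you must track the $K$-dependence of both Cauchy--Schwarz factors through the symmetrisation step, rather than absorb one of them into a constant before integrating over $K$.
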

\begin{proof}
 We may write
 \begin{align}
  T_{\uod, \lambda} \psi\uppar{n}(K)= -\sum_{j=1}^n \int_{\R^3}\frac{v(k_j)\omega(\xi)^{1/2}\psi\uppar{n}(\hat K_j, \xi)}{(L_P(K, \xi) + \lambda)^{1/2} \omega(k_j)^{1/2}} \frac{v(\xi)\omega(k_j)^{1/2}\ud \xi}{(L_P(K, \xi) + \lambda)^{1/2} \omega(\xi)^{1/2}}.
 \end{align}
By the Cauchy-Schwarz inequality we obtain
\begin{align}
  |T_{\uod, \lambda} \psi\uppar{n}(K)|^2 
  \leq &\left( \sum_{j=1}^n \int_{\R^3}\frac{|v(k_j)|^2 \omega(\xi)|\psi\uppar{n}(\hat K_j, \xi)|^2 }{(L_P(K, \xi) + \lambda) \omega(k_j)}\ud \xi \right) \label{eq:T_od1}\\
  & \times \left(\sum_{\ell=1}^n \int_{\R^3}\frac{|v(\eta)|^2\omega(k_\ell)}{(L_P(K, \eta) + \lambda) \omega(\eta)}\ud \eta\right).\label{eq:T_od2}
\end{align}
The factor~\eqref{eq:T_od2} is bounded by (writing $\Omega(K)=\sum_{j=1}^n \omega(k_j)$ and $p=P-\sum_{j=1}^n k_j$)
\begin{align}
 &\sum_{\ell=1}^n \int_{\R^3}\frac{|v(\eta)|^2\omega(k_\ell)}{(L_P(K, \eta) + \lambda) \omega(\eta)}\ud \eta \notag\\
 %
 %
 &\leq g^2\sum_{\ell=1}^n \omega(k_\ell) (\Omega(K)+\lambda)^{-1/2} \int_{\R^3}\frac{1}{\Big(\Big(\frac{p}{\sqrt{\Omega(K)+\lambda}} - \eta\Big)^2 +1\Big) |\eta|^{2}}\ud \eta\notag \\
 &\leq g^2 \sum_{\ell=1}^n \omega(k_\ell) \Omega(K)^{-1/2} \int_{\R^3}\frac{1}{(\eta^2 +1) |\eta|^{2}}\ud \eta,
\end{align}
where in the final step we have used the Hardy-Littlewood inequality on the integral.

We use this to estimate the integral of $|T_{\uod, \lambda} \psi\uppar{n}(K)|^2$ over $K$ and obtain
\begin{align}
 & \| T_{\uod, \lambda} \psi\uppar{n} \|_{\hilb_P\uppar{n}}^2 \notag \\
 & \leq C   \sum_{j, \ell=1}^n\int_{\R^{3n}} \int_{\R^3}\frac{ \omega(k_\ell) \Omega(K)^{-1/2} |v(k_j)|^2 \omega(\xi)|\psi\uppar{n}(\hat K_j, \xi)|^2 }{(L_P(K, \xi) + \lambda) \omega(k_j)} \ud \xi \ud K.
\end{align}
By renaming the variables $k_j=\eta$, $\xi=k_j$ in the $j$-th integral, and using the symmetry of $\psi\uppar{n}$, this becomes
\begin{align}
  \| T_{\uod, \lambda}& \psi\uppar{n} \|_{\hilb_P\uppar{n}}^2 \notag \\
  \leq & \sum_{j\neq \ell=1}^n\int_{\R^{3n}} \int_{\R^3}\frac{\omega(k_\ell) \Omega(\hat K_j, \eta)^{-1/2} |v(\eta)|^2 \omega(k_j)|\psi\uppar{n}(K)|^2 }{(L_P(K, \eta) + \lambda) \omega(\eta)} \ud \eta \ud K \label{eq:Tod bound1}\\
  & + \sum_{j=1}^n\int_{\R^{3n}} \int_{\R^3}\frac{ \Omega(\hat K_j, \eta)^{-1/2} |v(\eta)|^2 \omega(k_j)|\psi\uppar{n}(K)|^2 }{(L_P(K, \eta) + \lambda) } \ud \eta \ud K. \label{eq:Tod bound2}
\end{align}
The first term is bounded by
\begin{align}
 \eqref{eq:Tod bound1} \leq &  \sum_{j=1}^n
 \int_{\R^{3n}}  \int_{\R^3}\frac{\Omega(\hat K_j)^{1/2} |v(\eta)|^2 \omega(k_j)|\psi\uppar{n}(K)|^2 }{(L_P(K, \eta) + \lambda) \omega(\eta)} \ud \eta \ud K \notag\\
 \leq &  C \int_{\R^{3n}} \sum_{j=1}^n \omega(k_j)\Omega(K)^{-1/2} \Omega(\hat K_j)^{1/2}|\psi\uppar{n}(K)|^2  \ud K \notag \\
 \leq & C  \int_{\R^{3n}} \Omega(K)|\psi\uppar{n}(K)|^2  \ud K,
\end{align}
by the same scaling argument as before.

By the same reasoning, the second term satisfies the bound
\begin{align}
 \eqref{eq:Tod bound2} \leq &\sum_{j=1}^n\int_{\R^{3n}} \int_{\R^3}\frac{ |v(\eta)|^2 \omega(k_j)|\psi\uppar{n}(K)|^2 }{(L_P(K, \eta) + \lambda) \omega(\eta)^{1/2} } \ud \eta \ud K \notag \\
 \leq & C  \int_{\R^{3n}} \Omega(K) |\psi\uppar{n}(K)|^2 \ud K
\end{align}
and this proves the claim.
\end{proof}

\begin{lem}\label{lem:S_od}
There is a $C>0$ so that  the inequality
\begin{equation*}
 \| S_{\uod, \lambda} \psi\uppar{n} \|_{\hilb_P\uppar{n}} \leq C  \| \ud \Gamma(\omega)^{1/2} \psi\uppar{n} \|_{\hilb_P\uppar{n}}
\end{equation*}
holds for all $\lambda>0$ and $n\in \N$.
\end{lem}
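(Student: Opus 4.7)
The plan is to observe that $S_{\uod,\lambda}$ has exactly the same structure as $T_{\uod,\lambda}$, except that the denominator $L_P(K,\xi)+\lambda$ is replaced by $L_P(K,\xi)+\tau_{+,\lambda}(K,\xi)+\lambda$. Since $\tau_{+,\lambda}\geq 0$ by definition (it is the positive part of a real-valued function), the pointwise inequality
\begin{equation*}
 \frac{1}{L_P(K,\xi)+\tau_{+,\lambda}(K,\xi)+\lambda} \leq \frac{1}{L_P(K,\xi)+\lambda}
\end{equation*}
holds for all $K\in\R^{3n}$ and $\xi\in\R^3$. Thus the absolute value of the integrand defining $S_{\uod,\lambda}\psi\uppar{n}(K)$ is dominated by the absolute value of the integrand defining $T_{\uod,\lambda}\psi\uppar{n}(K)$.

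Concretely, I would simply rerun the proof of Lemma~\ref{lem:T_od} verbatim, replacing $L_P(K,\xi)+\lambda$ by $L_P(K,\xi)+\tau_{+,\lambda}(K,\xi)+\lambda$ throughout. The first step factorises the $j$-th summand of the kernel by inserting $\omega(\xi)^{1/2}\omega(k_j)^{1/2}/\omega(\xi)^{1/2}\omega(k_j)^{1/2}$, and the Cauchy-Schwarz inequality then produces two factors analogous to \eqref{eq:T_od1}--\eqref{eq:T_od2}, in which the only $\lambda$-dependence enters through the denominator. At every subsequent step, where the proof of Lemma~\ref{lem:T_od} uses an upper bound on $1/(L_P+\lambda)$ via scaling and the Hardy--Littlewood rearrangement inequality, the non-negativity of $\tau_{+,\lambda}$ only makes the corresponding integrals smaller.

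There is essentially no obstacle here, which is why this lemma is postponed to the appendix: the bound on $S_{\uod,\lambda}$ is dominated by the bound on $T_{\uod,\lambda}$ term by term and sector by sector, so the constant $C$ from Lemma~\ref{lem:T_od} already suffices. A short formal proof therefore consists of writing
\begin{equation*}
 |S_{\uod,\lambda}\psi\uppar{n}(K)| \leq \sum_{j=1}^n\int_{\R^3}\frac{|v(\xi)||v(k_j)||\psi\uppar{n}(\hat K_j,\xi)|}{L_P(K,\xi)+\lambda}\,\ud\xi,
\end{equation*}
recognising the right-hand side as the analogous pointwise bound used implicitly in the proof of Lemma~\ref{lem:T_od} (which proceeded via Cauchy--Schwarz on absolute values), and concluding with the same estimates on the two resulting $\xi$- and $\eta$-integrals, symmetrisation in the $k_j$ variables, and scaling to produce the bound by $\|\ud\Gamma(\omega)^{1/2}\psi\uppar{n}\|_{\hilb_P\uppar{n}}$.
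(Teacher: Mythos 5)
Your proposal is correct and coincides with the paper's own proof: since $\tau_{+,\lambda}\geq 0$, the kernel of $S_{\uod,\lambda}$ is pointwise dominated by that of $T_{\uod,\lambda}$, and the estimates of Lemma~\ref{lem:T_od} go through unchanged, which is exactly the (one-line) argument given in the paper.
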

\begin{proof}
 As $\tau_{+,\lambda}(K)\geq 0$, the proof is identical to that of Lemma~\ref{lem:T_od}. 
\end{proof}

%

\end{document}